\newcommand{\beq}{\begin{equation}}
\newcommand{\eeq}{\end{equation}}
\theoremstyle{plain}
\newtheorem{theorem}{Theorem}[section]
\newtheorem{proposition}[theorem]{Proposition}
\newtheorem{lemma}[theorem]{Lemma}
\theoremstyle{definition}
\newtheorem{remark}[theorem]{Remark}
\numberwithin{equation}{section}
\begin{document}

\title{On the initial boundary value problem for the Einstein vacuum equations in the maximal gauge}

\author{Grigorios Fournodavlos,\footnote{Sorbonne Universit\'e, CNRS, Universit\'e  de Paris, Laboratoire Jacques-Louis Lions (LJLL), F-75005 Paris, France,
		grigorios.fournodavlos@sorbonne-universite.fr}\;\; Jacques Smulevici\footnote{Sorbonne Universit\'e, CNRS, Universit\'e  de Paris, Laboratoire Jacques-Louis Lions (LJLL), F-75005 Paris, France, jacques.smulevici@sorbonne-universite.fr}}


\date{}

\maketitle

\parskip = 0 pt


\abstract{We consider the initial boundary value problem for the Einstein vacuum equations in the maximal gauge, or more generally, in a gauge where the mean curvature of a timelike foliation is fixed  {near the boundary}. We prove the existence of solutions such that the normal to the boundary is tangent to the time slices, the lapse of the induced time coordinate on the boundary is fixed and the main geometric boundary conditions are given by the $1$-parameter family of Riemannian conformal metrics on each two-dimensional section. As in the local existence theory of Christodoulou-Klainerman for the Einstein vacuum equations in the maximal gauge, we use as a reduced system the wave equations satisfied by the components of the second fundamental form of the the time foliation. The main difficulty lies in completing the above set of boundary conditions such that the reduced system is well-posed, but still allows for the recovery of the Einstein equations. We solve this problem by imposing the momentum constraint equations on the boundary, suitably modified by quantities vanishing in the maximal gauge setting. To derive energy estimates for the reduced system at time $t$, we show that all the terms in the flux integrals on the boundary can be either directly controlled by the boundary conditions or they lead to an integral on the two-dimensional section at time $t$ of the boundary. Exploiting again the maximal gauge condition on the boundary, this contribution  {to} the flux integrals can then be absorbed by  {a careful} trace inequality in the interior energy. }

\section{Introduction}
One of the most fundamental properties of the Einstein equations, if not the most, is their hyperbolic nature. This naturally leads to the initial value problem in General Relativity, solved in the work of Choquet-Bruhat \cite{Choq}: 
In the vacuum case, given any Riemannian manifold $(\Sigma,  {h})$ and covariant symmetric $2$-tensor $k$, satisfying the constraint equations 
\begin{eqnarray}
	R - | k |^2 + (\mathrm{tr} k)^2&=&0 \label{eq:hcons} \\
	d\,\mathrm{tr}k - \mathrm{div} k &=& 0, \label{eq:mcons}
	\end{eqnarray}
where $R$ is the scalar curvature of $h$, $\mathrm{tr} k$ is the trace of $k$ with respect to $h$, $d\,\mathrm{tr}k$ its differential and $\mathrm{div} k$ its divergence, there exists a maximal\footnote{The construction of a maximal development is due to Choquet-Bruhat and Geroch \cite{ChoqGer}.} globally hyperbolic development $(\mathcal{M},\bold{g})$ of $(\Sigma,h,k)$, which is unique modulo diffeomorphisms. 
As  {a} development of the data, it is a solution to the vacuum Einstein equations  {(EVE):
\begin{align}\label{EVE}
\mathrm{Ric}(\bold{g})=0, 
\end{align}
}such that $(\Sigma, h, k)$ can be embedded in $\mathcal{M}$ with $(h,k)$ being the first and second fundamental forms of the embedding.  {Due to the geometric nature of the equations}, to prove local well-posedness, one typically makes various gauge choices to derive a \emph{reduced system} of partial differential equations. This reduced system needs to be both hyperbolic in some sense and allow for the recovery of the full Einstein equations. 

In this paper, we will consider the initial boundary value problem, that is to say we are interested in constructing a $3+1$ Lorentzian manifold $(\mathcal{M},\bold{g})$ such that $\partial \mathcal{M}=\Sigma \cup \mathcal{T}$, where 
$\Sigma$ is a spacelike hypersurface of $\mathcal{M}$ with boundary $S$, $\mathcal{T}$ is a timelike hypersurface of $\mathcal{M}$ with compact boundary $S$ and $\Sigma \cap \mathcal{T}= S$. $\Sigma$ can be thought of as the initial hypersurface and thus, we should consider the first and second fundamental forms of $\Sigma$ as given, while on $\mathcal{T}$, boundary data or boundary conditions will need to be imposed so as to make the problem well-posed. On $S$, one typically needs compatibility conditions between the initial data and the boundary data. 

On top of its intrinsic mathematical appeal, the initial boundary value problem is motivated by
\begin{itemize}
\item the study of asymptotitcally Anti-de-Sitter spacetimes, which naturally leads to an initial boundary value problem after conformal rescaling, 
\item numerical applications, where, for numerical purposes, one typically needs to solve the equations on a finite domain with boundary, 
\item possible coupling with massive matter  {of} compact support, as for instance in the study of the Einstein-Euler equations, where the exterior region will posess such a timelike boundary \cite{KindEhlers}. 
\end{itemize}
The initial boundary value problem in General Relativity was first solved in the work of Friedrich in the Anti-de-Sitter case \cite{Fried} and Friedrich and Nagy for the  Einstein vacuum equations with timelike boundary \cite{FriedNag}. For an extensive review, we refer to \cite{SarTig}. Apart from the Friedrich-Nagy approach, which is based on the Bianchi equations and the construction of a special frame adapted to the boundary, the other well-developed theory for the study of the initial boundary value problem is that of Kreiss-Reula-Sarback-Winicour \cite{KRSW} based on generalized harmonic coordinates. 

In this paper, we prove well-posedness of the initial boundary value problem for the Einstein vacuum equations formulated in the maximal gauge, or more generally, in any gauge where the mean curvature of a timelike foliation is fixed. More precisely, we prove the existence of solutions such that the time slices intersect the boundary orthogonally, the lapse of the induced time coordinate on the boundary is fixed and the main geometric boundary conditions are given by the $1$-parameter family of Riemannian conformal metrics on each two-dimensional section. The dynamical variables that we consider are the components of the second fundamental form of the time foliation which satisfy a system of wave equations, as originally identified by Choquet-Bruhat-Ruggeri \cite{CBR}. In the work of Christodoulou-Klainerman \cite{CK}, this wave formulation of the equations was exploited to prove local existence of solutions to the Einstein equations in the maximal gauge. In the presence of a timelike boundary, one now needs to provide boundary conditions for the components of the second fundamental form. 
As is standard for geometric hyperbolic partial differential equations with constraints, the boundary conditions have to be compatible with the constraints.

We identify that these can be chosen as follows: 
\begin{itemize}
\item With $t$ as the time function, whose level sets are the maximal slices $\Sigma_t$, and $S_t=\Sigma_t \cap \mathcal{T}$, $\mathcal{T}$ being the timelike boundary, prescribing the conformal class of the induced metrics on each $S_t$ implies Dirichlet type boundary conditions for the traceless part of the projection of $k$ on each $S_t$. This essentially encodes the standard degrees of freedom corresponding to gravitational radiation. 
\item These boundary conditions are first complemented by the requirement that the slices $\Sigma_t$ intersect $\mathcal{T}$ orthogonally, by fixing the lapse of the induced time coordinate on $\mathcal{T}$ and by imposing the maximal condition $\mathrm{tr}k=0$ on $\mathcal{T}$. 
\item If $A,B$ are indices that correspond to spatial directions tangent to $\mathcal{T}$ and orthogonal to $\partial_ t$, it remains to impose boundary conditions on the trace of $k_{AB}$ (or equivalently on the volume forms of the induced metrics on $S_t$ ), as well as on $k_{NA}$, where $N$ denotes the unit normal to the boundary. For this, we identify a system of boundary conditions which is essentially equivalent to the momentum constraint equations \eqref{eq:mcons} in the maximal gauge, see \eqref{kNNbdcond}-\eqref{kCCbdcond}. 
\end{itemize}
The fact that, with these boundary conditions, on one hand, one can close energy estimates and on the other hand, one can a posteriori recover the Einstein equations, is the main contribution of this paper. 

Since the maximal gauge was instrumental in several global in time results for the Einstein equations, such as the monumental work of Christodoulou-Klainerman \cite{CK} on the stability of Minkowski space, our results may find applications in the global analysis of solutions in the presence of a timelike boundary. Moreover, let us mention that the BSSN formulation  {\cite{BSSN}}, which is heavily used in numerical analysis, is based on a $3+1$ decomposition of the Lorentzian metric and thus its analysis is likely to be closely related to the one we pursue here. 


One of the outstanding issues remaining, concerning the initial boundary value problem, is the geometric uniqueness problem of Friedrich \cite{Fried}. Apart from the AdS case, all other results establishing well-posedness for some formulations of the initial boundary value problem impose certain gauge conditions on the boundary and the boundary data depends on these choices. In particular, given a solution $(\mathcal{M},\bold{g})$ to the Einstein equations with a timelike boundary, different gauge choices will lead to different boundary data, in each of the formulations for which well-posedness is known. On the other hand, if we had been given the different boundary data a priori, we would not know that these lead to the same solution. The situation is thus different from the usual initial value problem for which only isometric data leads to isometric solutions, which one then regards as the same solution. In the AdS case, this problem admits one solution: in \cite{Fried}, Friedrich proved that one can take the conformal metric of the boundary as boundary data, which is a geometric condition independent of any gauge choice. 

The work of this paper still requires certain gauge conditions to be fixed, however, our boundary conditions describe at least part of the geometry of the boundary (via the family of conformal metrics). 



To state more precisely our main result, let us consider a Lorentzian manifold $(\mathcal{M},\bold{g})$ with a  {time} function $t$, such that 
$$
\bold{g}= - \Phi^2 dt^2 + g_{ij} dx^i dx^j,
$$
 {where $x^1,x^2,x^3$ are $t$-transported coodinates, $g,k$ denote the first and second fundamental forms of the level sets $\Sigma_t$ of $t$,} satisfying  {moreover the maximal condition} $\mathrm{tr} k =0$. 
We assume that we are given initial data  {$(h,k)$} on $\Sigma_0$ and that $\partial \mathcal{M}=\Sigma_0 \cup \mathcal{T}$, where $\mathcal{T}$ is a timelike boundary, which we assume coincides with $\{ x^3=0 \}$. Here, $\mathcal{M}$ admits coordinates $(t,x^1, x^2, x^3)$, where $x^3$ is assumed to be a boundary definining function. 
The induced metric on the boundary is thus given by 
$$
H=-\Phi^2 dt^2 + g_{11}(dx^1)^2 + g_{22}(dx^2)^2+ 2 g_{12}dx^1 dx^2. 
$$
The intersection of $\Sigma_t$ and $\mathcal{T}$ is a spacelike $2$-surface, denoted by $S_t$, with metric 
$$
q_t= g_{11}(dx^1)^2 + g_{22}(dx^2)^2+ 2 g_{12}dx^1 dx^2.
$$
Our boundary conditions will be such that we fix $\Phi=1$, as well as the conformal classes $[q_t]$ of the $1$-parameter family of metrics $q_t$. Since  {the cross sections} $S_t$ are all diffeomorphic to each other, one can think of this part of the data as a 1-parameter family of conformal metrics on a fixed $2$-dimensional manifold $S$. Moreover, on $S_0$, compatibility conditions between $q_t$ and $h,k$ will be required. These are introduced below in Subsection \ref{subsec:bc}, see \eqref{compcond}.

With this notation, we prove the following theorem
\begin{theorem}\label{mainthm}
Let $(\Sigma, h, k)$ be initial data satisfying the constraint equations  {\eqref{eq:hcons}-\eqref{eq:mcons}}, with $\Sigma$ a $3$-dimensional manifold with compact boundary $S$ and $k$ being traceless $tr k =0$  {(near the boundary)}. Consider a 1-parameter family of conformal metrics $[q_t]_{t \in I}$ on $S$, verifiying the compabitility conditions discussed in Subsection \ref{subsec:bc}. Then, there exists a Lorentzian manifold $(\mathcal{M},\bold{g})$ with timelike boundary $\mathcal{T}$,  {satisfying \eqref{EVE},} such that $\mathcal{M}$ is foliated by Cauchy hypersurfaces $\Sigma_t$, $t \in I$, an embedding of $\Sigma$ onto $\Sigma_0$ such that $(h,k)$ coincides with the first and second fundamental forms of the embedding, and the boundary conditions are verified on $\mathcal{T}$, as introduced above. The time interval of existence, $I$, depends continuously on the initial and boundary data.
\end{theorem}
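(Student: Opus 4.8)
The plan is to follow the Christodoulou--Klainerman strategy of reducing the Einstein vacuum equations to a system of wave equations for the components of the second fundamental form $k$, but now adapted to the presence of the timelike boundary $\mathcal{T}$. First I would set up the geometric framework: introduce the adapted frame $(\partial_t, N, e_A)$ where $N$ is the unit normal to $\mathcal{T}$ tangent to the slices $\Sigma_t$ and $e_A$ ($A=1,2$) span $TS_t$, and decompose $k$ into its components $k_{NN}, k_{NA}, k_{AB}$. Using the maximal condition $\mathrm{tr}\,k=0$, the evolution equations, and the constraints \eqref{eq:hcons}--\eqref{eq:mcons}, I would derive (following \cite{CBR}) the tensorial wave equation $\Box_{\mathbf{g}} k = \mathcal{N}(k, \partial k, \mathrm{Riem})$ satisfied by $k$, where the right-hand side is quadratic and the curvature is expressed back in terms of $k$ and $\nabla k$ via the Gauss--Codazzi relations. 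This furnishes the reduced hyperbolic system whose unknowns are the components of $k$ together with the lapse $\Phi$, determined by the elliptic lapse equation coming from differentiating $\mathrm{tr}\,k=0$ in time.

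Next I would specify the boundary conditions in the frame above and verify that they form a well-posed boundary value problem for the wave system. The Dirichlet condition on the traceless part of $k_{AB}$ is prescribed directly by the conformal class $[q_t]$; the orthogonality of $\Sigma_t$ and $\mathcal{T}$, the fixing of $\Phi=1$, and $\mathrm{tr}\,k=0$ on $\mathcal{T}$ supply further constraints; and the remaining components $\mathrm{tr}\,k_{AB}$ and $k_{NA}$ are governed by the boundary conditions \eqref{kNNbdcond}--\eqref{kCCbdcond} extracted from the momentum constraint \eqref{eq:mcons} suitably modified by terms vanishing in the maximal gauge. I would then set up the energy estimates for the reduced system: multiply the wave equation by the appropriate (timelike) multiplier, integrate over the slab $\bigcup_{t' \le t}\Sigma_{t'}$, and carefully analyze every term in the flux integral over $\mathcal{T}$. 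The crucial point, as flagged in the abstract, is that each boundary flux term must either be controlled directly by the prescribed boundary data, or reduce to an integral over the two-dimensional section $S_t$ which is then absorbed into the interior energy at time $t$ via a trace inequality exploiting the maximal gauge condition on $\mathcal{T}$.

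The hard part will be precisely this last step: showing that the boundary contributions to the flux are either sign-definite, controllable by the data, or absorbable by a trace inequality. The subtlety is that the natural energy flux on a timelike boundary does not have a favorable sign, so I would need to integrate by parts tangentially along $S_t$ and use the specific structure of the momentum-constraint boundary conditions together with $\mathrm{tr}\,k=0$ and $\Phi=1$ to convert the dangerous top-order boundary terms into an $S_t$-integral of lower-order or sign-definite quantities. Once the energy estimate closes for the linearized/reduced system, well-posedness (existence, uniqueness, continuous dependence on the data, hence the continuous dependence of $I$) would follow by a standard iteration or Galerkin scheme, with the compatibility conditions \eqref{compcond} ensuring matching at the corner $S_0$.

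Finally I would carry out the \emph{propagation of constraints} argument: having solved the reduced system, I must verify that the constructed $(\mathcal{M}, \mathbf{g})$ actually satisfies \eqref{EVE} and not merely the reduced equations. For this I would show that the Hamiltonian and momentum constraints, together with the vanishing of the auxiliary quantities measuring the difference between the reduced system and the full Einstein equations, satisfy a homogeneous system of transport/wave equations with trivial initial data on $\Sigma_0$ and compatible homogeneous boundary conditions on $\mathcal{T}$ — the latter being exactly where the choice to impose the momentum constraint \emph{as} the boundary condition pays off — so that a uniqueness argument forces them to vanish identically. This recovers $\mathrm{Ric}(\mathbf{g})=0$ throughout $\mathcal{M}$ and completes the proof of Theorem~\ref{mainthm}.
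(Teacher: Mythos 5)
Your proposal follows essentially the same route as the paper: the Choquet-Bruhat--Ruggeri/Christodoulou--Klainerman wave system for the components of $k$ coupled to the elliptic lapse equation, the same set of boundary conditions (conformal Dirichlet data for $\hat{k}_A{}^B$, $\Phi=1$, $\mathrm{tr}\,k=0$, and the momentum-constraint-type conditions for the remaining components), energy estimates closed by tangential commutation, tangential integration by parts along $S_t$ and trace-inequality absorption into the interior energy, and finally recovery of \eqref{EVE} from a homogeneous wave/transport system for the Einstein tensor whose homogeneous boundary conditions are exactly what the momentum-constraint boundary choice provides. The only deviations are minor: you propose eliminating the spatial curvature terms from the right-hand side of the wave equation for $k$ (which is the alternative route mentioned in Remark \ref{rem:loss}) instead of keeping them and handling the resulting loss of derivatives by the integration by parts in \eqref{IBPterm}, and you omit the localization/gluing step (P1)--(P3) that the paper needs because $\mathrm{tr}\,k=0$ and the gauge are only fixed near the boundary.
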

 {
\begin{remark}
The conditions $\text{tr}k=0$, $\Phi\big|_{\mathcal{T}}=1$, are not essential for our overall local existence argument. However, we do need to fix the foliation $\Sigma_t$ (near the boundary), such that $\text{tr}k,\Phi\big|_{\mathcal{T}}$ are prescribed, sufficiently regular functions.\footnote{ {Conditions for the existence of spacelike foliations with prescribed mean curvature, in the presence of boundaries, have been established by Bartnik \cite{Bar}.}}
\end{remark}
\begin{remark}
	If the data is asymptotically flat and $\text{tr}k=0$ initially, then one can obtain a solution which is globally foliated by maximal hypersurfaces. 
\end{remark}
\begin{remark}
The spacetime metric ${\bf g}$ is constructed by solving a set of reduced equations in the above gauge, verifying appropriate boundary conditions, see Section \ref{sec:enest}. Uniqueness for these equations also holds, however, it does not imply the desired geometric uniqueness we would like to have for the EVE. 
\end{remark}

The proof of Theorem \ref{mainthm} is based on deriving energy estimates, near the boundary, for a system of reduced equations, subject to certain boundary conditions, that we set up in Section \ref{sec:fram}. The energy argument is carried out in Section \ref{sec:enest}. 
We should note that even in the case where no timelike boundary is involved, the most naive scheme based on  {standard} energy estimates would fail to close due to loss of derivatives, see Remark \ref{rem:loss}. 

Interestingly, for the boundary value problem, even after exploiting all our boundary conditions, the total energy flux of $k$ at the boundary does not a priori have a sign. However, we demonstrate that after a careful use of trace inequalities, the terms which a priori could have the wrong sign can be absorbed in the interior energies. Emphasis is given to one particular boundary term, which is at the level of the main top order energies, and which requires a certain splitting in order to be absorbed in the left hand side of the estimates, see Remark \ref{rem:trace}. Such a manipulation is possible thanks to the maximal condition being valid on the boundary.\footnote{We adopt $\text{tr}k=0$ as one of our boundary conditions, see Subsection \ref{subsec:bc}.}

In Section \ref{sec:verEVE}, we confirm that the solution to the reduced system of equations is in fact a solution to \eqref{EVE}. First, we derive a system of propagation equations for the Einstein tensor, subject to boundary conditions induced from the ones of the reduced system, which are eligible to an energy estimate. Combining this fact with the vanishing of the Einstein tensor initially and the homogeneity of the induced boundary conditions for the final system of equations, we infer its vanishing everywhere.

For the energy estimates of \ref{sec:enest}, in order to preserve the choice of boundary conditions, we commute the equations only by tangential derivatives and recover the missing normal derivatives from the equations. A similar argument is used in the recovery of the Einstein equations when commuting the equation for $\text{tr} k$ in Section \ref{sec:verEVE}. 

\begin{quote}
\textbf{Acknowledgements.} {\small We would like to thank everyone at the Mittag-Leffler Institute for their hospitality and for generating a stimulating atmosphere, which was very beneficial for the completion of this work, during our visit in the Fall 2019.
Both authors are supported by the \texttt{ERC grant 714408 GEOWAKI}, under the European Union's Horizon 2020 research and innovation program.}
\end{quote}

}
\section{Framework}\label{sec:fram}

\indent

Our framework of choice is the one used for proving local existence for the EVE in the original Christodoulou-Klainerman stability of Minkowski proof \cite{CK}. We include a detailed outline of the whole procedure for the sake of completeness,  {cf.} \cite[\S10.2]{CK}. Moreover, given that our main point of interest is the initial boundary value problem (IBVP), we will focus mostly on controlling the boundary terms arising in the local existence argument, both in the estimates for the reduced equations (Section \ref{sec:enest}) and in the recovery of the Einstein vacuum equations  (Section \ref{sec:verEVE}). 

Let $(\mathcal{M}^{1+3},{\bf g})$ be a Lorentzian manifold with a timelike, $1+2$  {dimensional}, boundary $\partial \mathcal{M}$.
We consider a time function $t$ and the associated vector field $\partial_t$, which is parallel to the gradient of $t$ and satisfies $\partial_t(t)=1$. Also,
let $x^1,x^2,x^3$ denote Lie transported coordinates along the integral curves of $\partial_t$. In this case the spacetime metric takes the form
\begin{align}\label{metric}
{\bf g}=-\Phi^2dt^2+ {g}=-\Phi^2dt^2+ {g}_{ij}dx^idx^j,&&\Phi=(-g^{\alpha\beta}\partial_\alpha t\partial_\beta t)^{-\frac{1}{2}},
\end{align}
where $\Phi$ is the lapse of the foliation $\{t=$const.$\}=:\Sigma_t$.
In this framework, the first variation equations read
\begin{align}\label{1stvar}
\partial_t {g}_{ij}=-2\Phi k_{ij},&&k_{ij}:={\bf g}(D_{\partial_i}\partial_j,e_0)=k_{ji},\quad e_0=\Phi^{-1}\partial_t,
\end{align}
where $D$ is the covariant derivative intrinsic of ${\bf g}$. 
The 2-tensor $k_{ij}$ is the second fundamental form of $\Sigma_t$. We also have 
\begin{align}\label{1stvarinv}
\partial_t {g}^{ij}=2\Phi k^{ij}.
\end{align}
Moreover, the second variation equations read
\begin{align}\label{2ndvar}
\partial_tk_{ij}= -{\nabla}_i {\nabla}_j\Phi+\Phi( {R}_{ij}+k_{ij}\mathrm{tr}k-2{k_i}^lk_{jl})-\Phi {\bf R}_{ij},
\end{align}
where $ {\nabla}, {R}_{ij}$ are the covariant connection and Ricci tensor of $ {g}$, while ${\bf R}_{ij}$ is the Ricci tensor of ${\bf g}$. Imposing the EVE, the latter vanishes, whereas the former equals \cite[(3.4.5)]{Wald}:
\begin{align}\label{Ricci}
\text{R}_{ij}(g)=&\,\partial_a\Gamma^a_{ji}-\partial_j\Gamma^a_{ia}+\Gamma^a_{ab}\Gamma_{ji}^b-\Gamma^a_{jb}\Gamma^b_{ai}\\
\notag=&\,\nabla_a\Gamma_{ji}^a-\nabla_j\Gamma_{ia}^a-\Gamma^a_{ab}\Gamma_{ji}^b+\Gamma^a_{jb}\Gamma^b_{ai}
\end{align}
where $\nabla \Gamma$ is interpreted tensorially, e.g., $$\nabla_a\Gamma_{ji}^a:=\partial_a\Gamma_{ji}^a+\Gamma^a_{ab}\Gamma^b_{ij}-\Gamma_{aj}^b\Gamma_{bi}^a-\Gamma_{ai}^b\Gamma_{jb}^a.$$
In order to reveal the hyperbolic structure of \eqref{1stvar}-\eqref{2ndvar}, we need to differentiate \eqref{2ndvar} in $\partial_t$ and work with its second order analogue.
\begin{proposition}\label{prop:equiveq}
Let ${\bf g}$ be a Lorentzian metric expressed in the above framework. Then the propagation equation
\begin{align}\label{e0Rij4}
e_0{\bf R}_{ij}=\nabla_i\mathcal{G}_j+\nabla_j\mathcal{G}_i-\nabla_i\nabla_j\mathrm{tr}k,&&\mathcal{G}_i:={\bf R}_{0i},
\end{align}
is equivalent to the following wave equation for $k_{ij}$:
\begin{align}\label{boxk}
&\notag e_0^2k_{ij}-\Delta_g k_{ij}\\
=&\,\Phi^{-3}\partial_t\Phi\nabla_i\nabla_j\Phi-\Phi^{-2}\nabla_i\nabla_j\partial_t\Phi+\Phi^{-2}\partial_t\Gamma^l_{ij}\partial_l\Phi+e_0(k_{ij}\mathrm{tr}k-2{k_i}^lk_{jl})\\
\notag&+\Phi^{-1}k_{ij}\Delta_g\Phi
-\Phi^{-1}k_i{}^a\nabla_a\nabla_j\Phi-\Phi^{-1}k_j{}^a\nabla_a\nabla_i\Phi\\
\notag&-\Phi^{-1}\nabla^a\Phi(\nabla_jk_{ia}+\nabla_ik_{ja}-2\nabla_ak_{ij})
+\Phi^{-1}\mathrm{tr}k\nabla_j\nabla_i\Phi\\
\notag&+\Phi^{-1}\nabla_j\Phi(\nabla_i\mathrm{tr}k-\nabla_ak_i{}^a)
+\Phi^{-1}\nabla_i\Phi(\nabla_j\mathrm{tr}k-\nabla_ak_j{}^a)\\
\notag&-3(k_{ci}R_j{}^c+k_{cj}R_i{}^c) +2\mathrm{tr}kR_{ji}
+2 g_{ji}R_a{}^ck_c{}^a+(k_{ij}-g_{ji}\mathrm{tr}k)R,
\end{align}
for all $i,j=1,2,3$.
\end{proposition}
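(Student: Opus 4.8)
The plan is to establish \eqref{boxk} and \eqref{e0Rij4} as two ways of setting to zero one and the same tensorial expression, by producing a single \emph{unconditional} identity (valid for any metric in the framework \eqref{metric}) of the schematic form
$$ e_0^2 k_{ij}-\Delta_g k_{ij}-\big[\,\text{the RHS of }\eqref{boxk}\,\big] \;=\; -\big(e_0{\bf R}_{ij}-\nabla_i\mathcal{G}_j-\nabla_j\mathcal{G}_i+\nabla_i\nabla_j\mathrm{tr}k\big). $$
Once this identity is in hand the equivalence is immediate: its left-hand side vanishes precisely when \eqref{boxk} holds and its right-hand side vanishes precisely when \eqref{e0Rij4} holds. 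Thus the whole content of the proposition is the derivation of this identity, and no genuine ``direction'' needs to be argued separately.

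To derive it I would start from the second variation equation \eqref{2ndvar}, which is a geometric identity, and differentiate it in $\partial_t$. Writing $e_0=\Phi^{-1}\partial_t$ and using $e_0^2 k_{ij}=\Phi^{-2}\partial_t^2k_{ij}-\Phi^{-3}\partial_t\Phi\,\partial_tk_{ij}$, the $\partial_t\Phi$-weighted copy of the undifferentiated ${\bf R}_{ij}$ that appears is reabsorbed by substituting \eqref{2ndvar} for $\Phi{\bf R}_{ij}$ again; this makes the two $\partial_t\Phi\cdot(R_{ij}+k_{ij}\mathrm{tr}k-2k_i{}^lk_{jl})$ contributions cancel and produces the first term $\Phi^{-3}\partial_t\Phi\nabla_i\nabla_j\Phi$ of \eqref{boxk}. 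The time derivative of the Hessian is handled by the variation of the Christoffel symbols, $\partial_t(\nabla_i\nabla_j\Phi)=\nabla_i\nabla_j\partial_t\Phi-\partial_t\Gamma^l_{ij}\partial_l\Phi$, accounting for the terms $-\Phi^{-2}\nabla_i\nabla_j\partial_t\Phi+\Phi^{-2}\partial_t\Gamma^l_{ij}\partial_l\Phi$, while $\partial_t(k_{ij}\mathrm{tr}k-2k_i{}^lk_{jl})$ is evaluated using \eqref{1stvarinv} and \eqref{2ndvar} and yields the term $e_0(k_{ij}\mathrm{tr}k-2k_i{}^lk_{jl})$.

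The crux is the term $\Phi^{-1}\partial_t R_{ij}$, where $R_{ij}=R_{ij}(g)$ is the spatial Ricci tensor. Using the first variation $\partial_t g_{ij}=-2\Phi k_{ij}$ from \eqref{1stvar} in the linearization of \eqref{Ricci}, one obtains
$$\partial_t R_{ij}=\Delta_g(\Phi k_{ij})-\nabla^b\nabla_i(\Phi k_{jb})-\nabla^b\nabla_j(\Phi k_{ib})+\nabla_i\nabla_j(\Phi\,\mathrm{tr}k),$$
so that $\Phi^{-1}\partial_t R_{ij}$ contributes exactly the spatial wave-operator part $\Delta_g k_{ij}$, which is moved to the left to form $e_0^2k_{ij}-\Delta_g k_{ij}$. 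This is where \eqref{e0Rij4} enters: substituting it for $e_0{\bf R}_{ij}$ and expressing $\mathcal{G}_i={\bf R}_{0i}$ through the contracted Codazzi (momentum) relation ${\bf R}_{0i}=\nabla_i\mathrm{tr}k-\nabla^ak_{ai}$, the remaining second-order-in-$k$ terms of $\Phi^{-1}\partial_t R_{ij}$ (the double divergences $\nabla_i\nabla^bk_{bj},\nabla_j\nabla^bk_{bi}$ and the Hessians of $\mathrm{tr}k$) cancel identically against those produced by $\nabla_i\mathcal{G}_j+\nabla_j\mathcal{G}_i-\nabla_i\nabla_j\mathrm{tr}k$, leaving only $\Delta_g k_{ij}$ at top order; the subleading, $\nabla\Phi$-weighted remainders then reorganize into the $k\,\Delta_g\Phi$, $k\,\nabla\nabla\Phi$ terms and into the momentum expressions $\Phi^{-1}\nabla_j\Phi(\nabla_i\mathrm{tr}k-\nabla_ak_i{}^a)$, $\Phi^{-1}\nabla_i\Phi(\nabla_j\mathrm{tr}k-\nabla_ak_j{}^a)$ of \eqref{boxk}.

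The main obstacle I anticipate is the bookkeeping of the lower-order terms, and in particular the curvature contributions. To effect the top-order cancellation above one must commute $\nabla^b\nabla_i k_{bj}$ into $\nabla_i\nabla^bk_{bj}$, and the commutator $[\nabla^b,\nabla_i]k_{bj}$ produces Riemann$\,\cdot\,k$ terms; since $\dim\Sigma_t=3$, the Riemann tensor is a universal combination of $R_{ij}$, $R$ and $g$, and substituting this decomposition is what must reproduce precisely the last two lines of \eqref{boxk}, namely $-3(k_{ci}R_j{}^c+k_{cj}R_i{}^c)+2\mathrm{tr}k\,R_{ji}+2g_{ji}R_a{}^ck_c{}^a+(k_{ij}-g_{ji}\mathrm{tr}k)R$. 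Tracking the signs and coefficients through the commutations, the three-dimensional curvature decomposition, and the $\nabla\Phi$-weighted reorganizations is the delicate part; everything else is a chain of reversible algebraic substitutions of the geometric identities \eqref{1stvar}, \eqref{1stvarinv}, \eqref{2ndvar} and \eqref{Ricci}, which is exactly what renders the statement an equivalence rather than a one-way implication.
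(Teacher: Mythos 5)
Your proposal is correct and follows essentially the same route as the paper: the paper likewise differentiates the second variation equation \eqref{2ndvar} in time, computes $\partial_t R_{ij}$ via the variation of the Christoffel symbols (your displayed formula for $\partial_t R_{ij}$ is exactly what \eqref{dtGamma} and \eqref{dtRicci} combine to give), commutes the double divergences to produce Riemann$\,\cdot\,k$ terms, applies the three-dimensional decomposition \eqref{Riem=Ric} and the contracted Codazzi identity \eqref{Codazzi}, and thereby arrives at the unconditional identity \eqref{boxk3}, which is precisely your single identity whose two sides' vanishing encodes \eqref{boxk} and \eqref{e0Rij4} respectively. The equivalence then follows immediately, exactly as you frame it.
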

\begin{remark}
The operations in \eqref{boxk} are covariant, where the $\partial_t$ differentiations are viewed as applications of the Lie derivative operator $\mathcal{L}_{\partial_t}$, see also \eqref{dtGamma}.
\end{remark}
\begin{remark}
In the gauge $\text{tr}k=0$, many of the terms in \eqref{boxk} can be actually dropped. However, these terms would have to be added later in \eqref{e0Rij4}, when we will verify that a solution to the reduced equations, is actually a solution to the EVE, see Section \ref{sec:verEVE}.
\end{remark}
\begin{proof}
The main ingredient is the derivation of a formula for the time derivative of $R_{ij}$. For this purpose, we introduce some commutation formulas:
\begin{align}\label{comm}
\notag\partial_t\nabla_aX^b_{ij}=&\,\partial_t\bigg[\partial_aX^b_{ij}+\Gamma_{ac}^bX^c_{ij}-\Gamma_{ai}^cX^b_{cj}-\Gamma_{aj}^cX_{ic}^b\bigg]\\
=&\,\nabla_a\partial_tX^b_{ij}+X^c_{ij}\partial_t\Gamma_{ac}^b-X^b_{cj}\partial_t\Gamma^c_{ai}-X^b_{ic}\partial_t\Gamma^c_{ij},
\end{align}
for any $(1,2)$ tensor, where 
\begin{align}\label{dtGamma}
\notag\partial_t\Gamma^b_{ac}=&\,\Phi k^{bl}(\partial_ag_{cl}+\partial_cg_{al}-\partial_lg_{ac})+g^{bl}[\partial_l(\Phi k_{ac})-\partial_a(\Phi k_{cl})-\partial_c(\Phi k_{al})]\\
\notag=&\,2\Phi k^{bl}\Gamma_{ac}^mg_{ml}+g^{bl}[\partial_l(\Phi k_{ac})-\partial_a(\Phi k_{cl})-\partial_c(\Phi k_{al})]\\
=&\,g^{bl}\big[\nabla_l(\Phi k_{ac})-\nabla_a(\Phi k_{cl})-\nabla_c(\Phi k_{al})\big]
\end{align}
Differentiating \eqref{Ricci} and utilising \eqref{comm}, we find
\begin{align}\label{dtRicci}
\partial_tR_{ij}=&\,\nabla_a\partial_t\Gamma_{ji}^a+\Gamma^c_{ji}\partial_t\Gamma^a_{ac}-\Gamma^a_{ci}\partial_t\Gamma^c_{aj}
-\Gamma^a_{jc}\partial_t\Gamma^c_{ai}\\
\notag&-\nabla_j\partial_t\Gamma_{ia}^a-\Gamma^c_{ia}\partial_t\Gamma_{jc}^a+\Gamma^a_{ca}\partial_t\Gamma^c_{ji}+\Gamma^a_{ic}\partial_t\Gamma^c_{ja}\\
\notag&-\partial_t(\Gamma^a_{ca}\Gamma^c_{ji})+\partial_t(\Gamma^a_{ci}\Gamma^c_{aj})\\
\notag=&\,\nabla_a\partial_t\Gamma_{ji}^a-\nabla_j\partial_t\Gamma_{ia}^a
\end{align}
Taking now the time derivative of \eqref{2ndvar} and employing \eqref{dtGamma}-\eqref{dtRicci}, we derive:
\begin{align}\label{boxk2}
\partial_t(\Phi^{-1}\partial_tk_{ij})=&\,\Phi^{-2}\partial_t\Phi\nabla_i\nabla_j\Phi-\Phi^{-1}\nabla_i\nabla_j\partial_t\Phi+\Phi^{-1}\partial_t\Gamma^l_{ij}\partial_l\Phi+\partial_t(k_{ij}\mathrm{tr}k-2{k_i}^lk_{jl})\\
\notag&+\nabla_a \big[\nabla^a(\Phi k_{ij})-\nabla_j(\Phi k_i{}^a)-\nabla_i(\Phi k_j{}^a)\big]\\
\notag&-\nabla_j\big[\nabla^a(\Phi k_{ia})-\nabla_a(\Phi k_i{}^a)-\nabla_i(\Phi k_a{}^a)\big]-\partial_t{\bf R}_{ij}\\
\notag=&\,\Phi^{-2}\partial_t\Phi\nabla_i\nabla_j\Phi-\Phi^{-1}\nabla_i\nabla_j\partial_t\Phi+\Phi^{-1}\partial_t\Gamma^l_{ij}\partial_l\Phi+\partial_t(k_{ij}\mathrm{tr}k-2{k_i}^lk_{jl})\\
\notag&+\Phi\Delta_gk_{ij}+k_{ij}\Delta_g\Phi+2\nabla^a\Phi\nabla_ak_{ij}
-k_i{}^a\nabla_a\nabla_j\Phi-k_j{}^a\nabla_a\nabla_i\Phi\\
\notag&-\nabla^a\Phi(\nabla_jk_{ia}+\nabla_ik_{ja})-\nabla_j\Phi\nabla_ak_i{}^a-\nabla_i\Phi\nabla_ak_j{}^a
-\Phi\nabla_a\nabla_jk_i{}^a-\Phi\nabla_a\nabla_ik_j{}^a\\
\notag&+\mathrm{tr}k\nabla_j\nabla_i\Phi
+\Phi \nabla_j\nabla_i\mathrm{tr}k+\nabla_i\mathrm{tr}k\nabla_j\Phi
+\nabla_j\mathrm{tr}k\nabla_i\Phi-\partial_t{\bf R}_{ij}
\end{align}
Next, we utilise the identity:
\begin{align}\label{nabla2k}
-\Phi\nabla_a\nabla_jk_i{}^a-\Phi\nabla_a\nabla_ik_j{}^a
=-2\Phi R_{aji}{}^ck_c{}^a-R_j{}^ck_{ic}-R_i{}^ck_{jc}-\Phi\nabla_j\nabla_ak_i{}^a-\Phi\nabla_i\nabla_ak_j{}^a,
\end{align}
Note that in 3D the Riemann tensor can be expressed in terms of the Ricci tensor via the identity \cite[(3.2.28)]{Wald}:
\begin{align}\label{Riem=Ric}
R_{aji}{}^c=&\,g_{ai}R_j{}^c-\delta_a{}^cR_{ji}
-g_{ji}R_a{}^c+\delta_j{}^cR_{ai}-\frac{1}{2}R(g_{ai}\delta_j{}^c-\delta_a{}^cg_{ji})\\
\notag\Rightarrow\qquad-2\Phi R_{aji}{}^c{}k_c{}^a=&-2\Phi \bigg[k_{ci}R_j{}^c-\mathrm{tr}kR_{ji}
-g_{ji}R_a{}^ck_c{}^a+k_j{}^aR_{ai}-\frac{1}{2}R(k_{ij}-g_{ji}\mathrm{tr}k)\bigg]
\end{align}
Also, from the contracted Gauss and Codazzi equations we have the identities:
\begin{align}
\label{Gauss}R-|k|^2+(\mathrm{tr}k)^2={\bf R}+2\Phi^{-2}{\bf R}_{tt}\\
\label{Codazzi}
\partial_j\mathrm{tr}k-\nabla^ak_{aj}=\Phi^{-1}{\bf R}_{tj}=\mathcal{G}_j,&&j=1,2,3.
\end{align}
Hence, plugging \eqref{nabla2k},\eqref{Riem=Ric},\eqref{Codazzi} in \eqref{boxk2}, we arrive at the equation:
\begin{align}\label{boxk3}
&\notag e_0^2k_{ij}-\Delta_g k_{ij}\\
=&\,\Phi^{-3}\partial_t\Phi\nabla_i\nabla_j\Phi-\Phi^{-2}\nabla_i\nabla_j\partial_t\Phi+\Phi^{-2}\partial_t\Gamma^l_{ij}\partial_l\Phi+e_0(k_{ij}\mathrm{tr}k-2{k_i}^lk_{jl})\\
\notag&+\Phi^{-1}k_{ij}\Delta_g\Phi
-\Phi^{-1}k_i{}^a\nabla_a\nabla_j\Phi-\Phi^{-1}k_j{}^a\nabla_a\nabla_i\Phi\\
\notag&-\Phi^{-1}\nabla^a\Phi(\nabla_jk_{ia}+\nabla_ik_{ja}-2\nabla_ak_{ij})
+\Phi^{-1}\mathrm{tr}k\nabla_j\nabla_i\Phi
-\nabla_j\nabla_i\mathrm{tr}k\\
\notag&+\Phi^{-1}\nabla_j\Phi(\nabla_i\mathrm{tr}k-\nabla_ak_i{}^a)
+\Phi^{-1}\nabla_i\Phi(\nabla_j\mathrm{tr}k-\nabla_ak_j{}^a)\\
\notag&-3(k_{ci}R_j{}^c+k_{cj}R_i{}^c) +2\mathrm{tr}kR_{ji}
+2 g_{ji}R_a{}^ck_c{}^a+(k_{ij}-g_{ji}\mathrm{tr}k)R\\
\notag&+\nabla_j\mathcal{G}_i+\nabla_i\mathcal{G}_j-e_0{\bf R}_{ij}
\end{align}
This completes the proof  {of the proposition}.
\end{proof}
 {In the case of study, where ${\bf g}$ is a solution to the EVE, under the maximal gauge condition $\text{tr}k=0$, the equation \eqref{e0Rij4} holds trivially and hence so does \eqref{boxk}. Moreover, taking the trace of \eqref{2ndvar} and using \eqref{Gauss}, we obtain the relations:
\begin{align}\label{d/dttrk}
\partial_t\mathrm{tr}k=-\Delta_g\Phi+\Phi [R+(\mathrm{tr}k)^2]-\Phi({\bf R}+{\bf R}_{00})
=-\Delta_g\Phi+|k|^2\Phi+\Phi{\bf R}_{00}
\end{align}
Since $\text{tr}k$ and spacetime Ricci vanish, \eqref{d/dttrk} yields the following elliptic equation for the lapse:
\begin{align}\label{Phieq}
\Delta_g\Phi-|k|^2\Phi=0.
\end{align}
The reduced equations \eqref{1stvar},\eqref{boxk},\eqref{Phieq} form a closed system for $g,k,\Phi$.}
\begin{remark}
A posteriori, having solved the reduced equations, in order to verify the validity of the maximal gauge, we will need to propagate the vanishing of $\mathrm{tr}k$. For this purpose, we compute the trace of \eqref{boxk}, using only \eqref{1stvar},\eqref{Phieq}:
\begin{align}\label{boxtrk}
&\notag e_0^2\mathrm{tr}k-\Delta_g \mathrm{tr}k\\
\notag=&\,e_0(2k^{ij}k_{ij})
+2k^{ij}e_0k_{ij}
+g^{ij}(e_0^2k_{ij}-\Delta_g k_{ij})\\
=&\,e_0(2k^{ij}k_{ij})
+2k^{ij}e_0k_{ij}-g^{ij}e_0(\Phi^{-1}\nabla_i\nabla_j\Phi)
+g^{ij}e_0(k_{ij}\text{tr}k-2k_i{}^lk_{jl})\\
&\notag+2\mathrm{tr}k|k|^2
-2\Phi^{-1}k^{ja}\nabla_a\nabla_j\Phi
+4\Phi^{-1}(\nabla^a\Phi)\mathcal{G}_a\\
\notag=&\,e_0[(\mathrm{tr}k)^2]+4\Phi^{-1}(\nabla^a\Phi)\mathcal{G}_a
\end{align}
\end{remark}
\begin{remark}
To our knowledge, the reduction of the EVE to a wave equation for $k_{ij}$ was first demonstrated in the literature by Choquet-Bruhat--Ruggeri \cite{CBR}. In fact, they derived a system for $P_{ij}:=k_{ij}-g_{ij}\text{tr}k$, using the gauge choice
\begin{align*}
\square_{\bf g}t=0\qquad\Longleftrightarrow\qquad \Phi^{-2}\partial_t\Phi=-\text{tr}k,
\end{align*}
for the $t$-foliation.
\end{remark}
\begin{remark}\label{rem:loss}
The Ricci tensor of $g$ in the RHS of \eqref{boxk} contains terms having two spatial derivatives of $g$. At first glance, this makes the closure of the reduced system more intricate, since \eqref{1stvar} does not gain a derivative in space. However, we demonstrate below, see \eqref{IBPterm}, how to treat these terms in the energy estimates by integrating by parts. Alternatively, these terms could be replaced in the derivation of \eqref{boxk}, in favour of spacetime Ricci, by using the second variation equations \eqref{2ndvar}, involving only $k,\partial_tk,\nabla\nabla\Phi$. In that case, the propagation equation \eqref{e0Rij4} would have to be modified accordingly, adding the appropriate combination of zeroth order Ricci terms.
\end{remark}
\subsection{Boundary data and boundary conditions}\label{subsec:bc}

We assume that the timelike boundary of $\mathcal{M}$, $\mathcal{T}$, is foliated by the compact surfaces $\partial \Sigma_t:=\Sigma_t\cap \mathcal{T}$. Let $H$ denote the induced, $1+2$, Lorentzian metric on the boundary $\mathcal{T}$. For simplicity, we assume that $t\big|_{\mathcal{T}}$ defines a geodesic foliation with respect to the induced metric on the boundary, i.e., $H$ takes the form 
\begin{align}\label{metricH}
H:={\bf g}\big|_{\mathcal{T}}=-[d(t\big|_{\mathcal{T}})]^2+q_t=-[d(t\big|_{\mathcal{T}})]^2+(q_t)_{AB}dx^Adx^B,&&A,B=1,2,
\end{align}
where $x^1,x^2$ are coordinates propagated along the boundary by $\partial_{t|\mathcal{T}}$.\footnote{Defined such that it is parallel to the $H$-gradient of $t\big|_{\mathcal{T}}$, satisfying $\partial_{t|\mathcal{T}}(t\big|_{\mathcal{T}})=1$.}
Combining this assumption with the boundary condition 
\begin{align}\label{Phibdcond}
\Phi=1,\qquad\text{on $\mathcal{T}$},
\end{align}
we infer that the vector field $\partial_t\big|_{\mathcal{T}}$ remains tangent to the boundary $\mathcal{T}$, $\partial_t\big|_{\mathcal{T}}\in T(\mathcal{T})$, and hence, it coincides with $\partial_{t|\mathcal{T}}$. Indeed, from the form of the metrics \eqref{metric},\eqref{metricH} and the definition of the lapse, it follows that the (outward) unit normal to the boundary, $N\perp T(\mathcal{T})$, annihilates $t$:
\begin{align}\label{Nt=0}
N(t)=0,\qquad\text{on $\mathcal{T}$},
\end{align}
which in turn implies that the ${\bf g}$-gradient of $t$ is orthogonal to $N$. 

Moreover, we assume that $\partial\Sigma_0$ has a neighbourhood in $\Sigma_0$, which is covered by the level sets of a defining function $x^3$:\footnote{This can be for example, the Gaussian parameter in a tubular neighbourhood of $\partial\Sigma_0$.}
\begin{align}\label{x3}
x^3=0:\quad\text{on $\partial \Sigma_0$},\qquad x^3<0: \quad\text{in $\Sigma_0\setminus\partial \Sigma_0$}, \qquad dx^3\neq0:\quad\text{on $\partial \Sigma_0$}.
\end{align}
Since $\partial_t\big|_{\mathcal{T}}$ is tangent to the boundary, we may complement $x^3$ with coordinates $x^1,x^2$, near a fixed point $p\in\partial \Sigma_0$, and propagate these along $\partial_t$ to obtain a coordinate system $(t,x^1,x^2,x^3)$ in a spacetime neighbourhood of $p\in\partial \Sigma_0$. Evidently, for small $t$, $x^3$ will remain a defining function of the boundary. Note that by definition, the gradient of $x^3$, $Dx^3$, is normal to the boundary. Hence, setting
\begin{align}\label{dx3}
N:=\frac{Dx^3}{\sqrt{{\bf g}(Dx^3,Dx^3)}},\qquad Dx^3:={\bf g}^{ij}\partial_ix^3\partial_j=g^{3j}\partial_j,\quad g^{33}\partial_3=(g^{33})^\frac{1}{2}N-g^{31}\partial_1-g^{32}\partial_2,
\end{align}
the vector field $N$ is an extension in $\mathcal{M}$, locally around $p$, of the outward unit normal to the boundary $\mathcal{T}$.
\begin{remark}
 {The defining function $x^3$ is global near the boundary, but more than one coordinate patches $x^1,x^2$ have to potentially be used, along the level sets of $x^3$, in order to cover an entire neighbourhood of the boundary $\partial\Sigma_0$. However, for simplicity in the exposition of our overall argument, we will only work with a single patch, projecting the wave equation for $k$ onto this specific frame. Since the wave equation for $k$ \eqref{boxk} is tensorial and since the lapse $\Phi$ is independent of the choice of coordinates on $\Sigma_t$, the whole procedure can then be carried out tensorially in the planes generated by $\partial_1$ and $\partial_2$.}
\end{remark}

{\it Boundary data}: We evaluate $k$ on the boundary against the adapted frame $\partial_A,\partial_B,N$, $A,B=1,2$. The boundary data for the IBVP are given in terms of the 1-parameter family $[q_t]$ of conformal metrics on $\partial\Sigma_t$, see Theorem \ref{mainthm}, which only determine the values of the traceless part of $k$ along the cross sections $\partial\Sigma_t$ with one index raised:
\begin{lemma}\label{lem:bd}
Let $(q_t)_{AB}=\Omega^2[q_t]_{AB}$ and let 
\begin{align}\label{khat}
\hat{k}_A{}^B:=k_A{}^B-\frac{1}{2}\delta_A{}^Bk_C{}^C.
\end{align}
Then, it holds
\begin{align}\label{khatcomp}
\hat{k}_A{}^B=-\frac{1}{2}[q_t]^{BC}\partial_t [q_t]_{AC}+\frac{1}{4}\delta_A{}^B[q_t]^{DC}\partial_t[q_t]_{DC},
\end{align}
for all $A,B=1,2$.
\end{lemma}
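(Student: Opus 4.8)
The plan is to compute the traceless part of the second fundamental form restricted to the cross sections $\partial\Sigma_t$ purely from the first variation equation \eqref{1stvar}, exploiting the fact that on the boundary $\Phi=1$ by \eqref{Phibdcond}. The key observation is that the components $k_{AB}$ for $A,B=1,2$ are essentially the second fundamental form of the $t$-foliation seen tangentially along $\mathcal{T}$, and that $\partial_t\big|_{\mathcal{T}}=\partial_{t|\mathcal{T}}$ is tangent to the boundary. Restricting \eqref{1stvar} to the tangential indices gives $\partial_t(q_t)_{AB}=-2\Phi k_{AB}=-2k_{AB}$ on $\mathcal{T}$, so that $k_{AB}=-\tfrac12\partial_t(q_t)_{AB}$. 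This is the computational heart of the statement and it reduces the problem to linear algebra in the $2\times2$ tangential block.

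First I would write $(q_t)_{AB}=\Omega^2[q_t]_{AB}$ and raise an index with the full inverse metric, being careful that on the cross sections the relevant inverse is $(q_t)^{AB}=\Omega^{-2}[q_t]^{AB}$. Then
\[
k_A{}^B=(q_t)^{BC}k_{AC}=-\tfrac12(q_t)^{BC}\partial_t(q_t)_{AC}.
\]
Expanding $\partial_t(q_t)_{AC}=2\Omega\,\partial_t\Omega\,[q_t]_{AC}+\Omega^2\partial_t[q_t]_{AC}$ and using $(q_t)^{BC}=\Omega^{-2}[q_t]^{BC}$, the conformal factor $\Omega$ cancels in the second term and contributes a pure-trace term $-\Omega^{-1}\partial_t\Omega\,\delta_A{}^B$ in the first. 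Thus
\[
k_A{}^B=-\Omega^{-1}\partial_t\Omega\,\delta_A{}^B-\tfrac12[q_t]^{BC}\partial_t[q_t]_{AC}.
\]

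Next I would subtract the trace. Taking $B=A$ and summing, the term $[q_t]^{AC}\partial_t[q_t]_{AC}=\partial_t\log\det[q_t]$ survives, and the definition \eqref{khat} of $\hat{k}_A{}^B$ removes the conformal-factor term entirely: the $-\Omega^{-1}\partial_t\Omega\,\delta_A{}^B$ piece is pure trace and is therefore exactly cancelled by $-\tfrac12\delta_A{}^Bk_C{}^C$. What remains is precisely
\[
\hat{k}_A{}^B=-\tfrac12[q_t]^{BC}\partial_t[q_t]_{AC}+\tfrac14\delta_A{}^B[q_t]^{DC}\partial_t[q_t]_{DC},
\]
which is \eqref{khatcomp}, manifestly depending only on the conformal class $[q_t]$ and not on $\Omega$.

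The main subtlety to check carefully is that the tangential restriction of \eqref{1stvar} genuinely yields $\partial_t(q_t)_{AB}=-2k_{AB}$, i.e. that $\partial_t$ acting on the induced metric $q_t$ coincides with the tangential components of $\mathcal{L}_{\partial_t}g$. This is where the geometric setup from the previous paragraphs is essential: because $\Phi=1$ and the foliation is geodesic on $\mathcal{T}$, the coordinate $\partial_t\big|_{\mathcal{T}}$ is tangent to the boundary and equals $\partial_{t|\mathcal{T}}$, so differentiating the block $(q_t)_{AB}=g_{AB}$ along $\partial_t$ commutes with restriction to the surfaces $\partial\Sigma_t$. Once this identification is made, the remaining computation is the elementary conformal decomposition above, and the cancellation of $\Omega$ in the traceless part is automatic.
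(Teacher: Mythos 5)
Your proof is correct and follows essentially the same route as the paper's: write $k_{AB}=-\tfrac12\partial_t\bigl(\Omega^2[q_t]_{AB}\bigr)$, raise an index with $(q_t)^{BC}=\Omega^{-2}[q_t]^{BC}$, and observe that the $\Omega^{-1}\partial_t\Omega\,\delta_A{}^B$ terms cancel upon subtracting $\tfrac12\delta_A{}^B k_C{}^C$. The only difference is that you additionally spell out why $k_{AB}=-\tfrac12\partial_t(q_t)_{AB}$ on $\mathcal{T}$ (via \eqref{1stvar}, $\Phi=1$, and tangency of $\partial_t$ to the boundary), a step the paper takes as the starting point.
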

\begin{proof}
We compute
\begin{align*}
k_{AB}:=&-\frac{1}{2}\partial_t(\Omega^2[q_t]_{AB}),\qquad k_C{}^C=\Omega^{-2}[q_t]^{DC}k_{DC}=-\frac{1}{2}[q_t]^{DC}\partial_t[q_t]_{DC}-2\Omega^{-1}\partial_t\Omega ,\\
k_A{}^B=&\,\Omega^{-2}[q_t]^{BC}k_{AC}=-\frac{1}{2}\Omega^{-2}[q_t]^{BC}\partial_t(\Omega^2[q_t]_{AC})=-\frac{1}{2}[q_t]^{BC}\partial_t [q_t]_{AC}-\delta_A{}^B\Omega^{-1}\partial_t\Omega.
\end{align*}
Subtracting $\frac{1}{2}\delta_A{}^B$ times the second formula from the third, we notice that the terms involving $\Omega$ cancel out, leaving \eqref{khatcomp}.
\end{proof}
To the rest of the components of $k$, we impose boundary conditions that propagate the maximal gauge and the momentum constraint \eqref{eq:mcons} on the boundary $\mathcal{T}$:
\begin{align}\label{kNNbdcond}
k_{NN}:=&-k_C{}^C, \\
%
\label{kNAbdcond}
\notag\nabla_Nk_{NA}:=&-\nabla_Bk_A{}^B\\
=&-\partial_B(\hat{k}_A{}^B+\frac{1}{2}\delta_A{}^Bk_C{}^C)
+\Gamma_{AB}^C(\hat{k}_C{}^B-\frac{1}{2}\delta_C{}^Bk_{NN})-\Gamma_{BC}^B(\hat{k}_A{}^C-\frac{1}{2}\delta_A{}^Ck_{NN})\\
\notag&+\chi_A{}^Bk_{NB}+\chi_B{}^Bk_{NA}, \\
%
\label{kCCbdcond}
\frac{1}{2}(\nabla_Nk_{NN}-\nabla_Nk_A{}^A):=&-\nabla^Ak_{NA},
\end{align}
where $\chi_{ij}:={\bf g}(D_{\partial_i}\partial_j,N)$ is the second fundamental form of the boundary $\mathcal{T}$, while $\Gamma_{AB}^C$ are Christoffel symbols associated to the induced metric $q_t$ on $\partial\Sigma_t$. 
\begin{remark}
 {The combination of \eqref{kNNbdcond}-\eqref{kNAbdcond}, imply the validity of the momentum constraint \eqref{eq:mcons}, projected on $\partial_1,\partial_2$. However, the last condition \eqref{kCCbdcond} differs slightly from \eqref{eq:mcons}, in the normal direction $N$, since a priori the Neumann type data $N\text{tr}k$ is not known to vanish on the boundary. We found such a modification necessary for the absorption of the boundary terms that arise in the energy estimates for the reduced equation \eqref{boxk}, see Section \ref{sec:enest}. 
Despite this modification, as we show in Section \ref{sec:verEVE}, the above boundary conditions are sufficient for the recovery of the EVE from the reduced equations.}
\end{remark}
\begin{remark}
At first glance, the heavily coupled, mixed Dirichlet-Neumann boundary conditions \eqref{kNNbdcond}-\eqref{kCCbdcond} seem to be losing derivatives in an energy argument for \eqref{boxk}. However, we show that by some careful manipulations, the arising boundary terms can all be absorbed in the main energies and close the estimates, see Proposition \ref{prop:kenest}.
\end{remark}
{\it Initial data}: An initial data set $h,k$ for the EVE on $\Sigma_0$, induces the initial data for \eqref{1stvar} and half of the initial data for \eqref{boxk}. These are sufficient to determine $\Phi$ from \eqref{Phieq}, satisfying the Dirichlet boundary condition \eqref{Phibdcond}. Then the $\partial_tk$ part of the initial data for \eqref{boxk} is fixed by the second variation equations \eqref{2ndvar}, such that the EVE are valid initially on $\Sigma_0$:
\begin{align}\label{dtkinit}
\partial_tk_{ij}\big|_{t=0}= -{\nabla}_i {\nabla}_j\Phi+\Phi( {R}_{ij}+k_{ij}\mathrm{tr}k-2{k_i}^lk_{jl})\big|_{t=0}\qquad\Longleftrightarrow\qquad {\bf R}_{ij}\big|_{\Sigma_0}=0,
\end{align}
for every $i,j=1,2,3$.
Since $g,k$ satisfy the constraints \eqref{eq:hcons}-\eqref{eq:mcons} initially, combining \eqref{dtkinit} with the Gauss and Codazzi equations \eqref{Gauss}-\eqref{Codazzi}, we also have:
\begin{align}\label{R0iinit}
{\bf R}_{00}\big|_{\Sigma_0}={\bf R}_{0i}\big|_{\Sigma_0}=0,\qquad i=1,2,3.
\end{align}
Moreover, $k$  {satisfies} the maximal gauge $\text{tr}k=0$ on $\Sigma_0$. Then, by taking the trace in \eqref{dtkinit}, we arrive at \eqref{d/dttrk} for $t=0$, where by employing \eqref{R0iinit} and the equation \eqref{Phieq} for $\Phi$, we  obtain:
\begin{align}\label{dttrkinit}
\partial_t\text{tr}k\big|_{t=0}=0.
\end{align}
The initial conditions \eqref{dtkinit}-\eqref{dttrkinit} will be used in Section \ref{sec:verEVE} to verify the EVE and the maximal gauge everywhere.

{\it Compatibility conditions}: The initial tensors $h,k$ on $\Sigma_0$ must induce tensors on $S$, which are compatible with the prescribed boundary data. For example, $[h\big|_{\partial\Sigma_0}]=[q_0]$ and $\hat{k}_A{}^B\big|_{S}$ satisfying \eqref{khatcomp}.  The less obvious condition for $\partial_t\hat{k}_A{}^B$ is given through \eqref{dtkinit}:
\begin{align}\label{compcond}
\partial_t\hat{k}_A{}^B\big|_{S}=({R}_A{}^B-\frac{1}{2}\delta_A{}^B\partial_tk_C{}^C-{\nabla}_A {\nabla}^B\Phi)\big|_{S},
\end{align}
where we used the vanishing of $\text{tr}k$ on $\Sigma_0$ and \eqref{Phibdcond}. Notice that the LHS is expressed via \eqref{khatcomp} solely in terms of the conformal metric class $[q_0]$ on $S$ and the time derivatives of $[q_t]$ up to order two, evaluated at $t=0$. Similar relations can be computed to any higher order. Also, note that by virtue of \eqref{Phibdcond}, the Hessian of $\Phi$ above equals:
\begin{align*}
\nabla_A\nabla^B\Phi\big|_S=-\chi_A{}^BN\Phi\big|_S,
\end{align*}
where $N\Phi$ is determined through the Dirichlet to Neumann map for \eqref{Phieq}.

\subsection{The commuted equations and boundary conditions}

We find it suitable to evaluate the wave equation \eqref{boxk} against $\partial_1,\partial_2,N$ and obtain scalarised\footnote{In the case where the initial cross section of the boundary cannot be covered by one coordinate patch, note that all equations below can still be written tensorially in $A,B$.} versions of \eqref{boxk} for the components $k_A{}^B,k_{NA},k_{NN}$.
Recall that $\partial_t$ acts as a Lie derivative. Therefore, we need to take into account the commutation of $\partial_t$ with $N$:
\begin{align}\label{commdtN}
{\bf g}([\partial_t,N],\partial_t)\overset{\eqref{dx3}}{=}0,\qquad [\partial_t,N]^A=(D_{\partial_t}N)^A-(D_N\partial_t)^A=2\Phi K_N{}^A,\qquad[\partial_t,N]^N=\Phi k_{NN},
\end{align}
This implies that
\begin{align}\label{dtkNi}
\notag e_0k_{NA}=&\,e_0(k_{NA})-2 k_N{}^Bk_{BA}- k_{NN}k_{NA}\\
\notag e_0k_{NN}=&\,e_0(k_{NN})-4 k_N{}^Ak_{AN}-2 k_{NN}^2\\
\notag e_0^2k_{NA}=&\,\Phi^{-1}\mathcal{L}_{\partial_t}(\Phi^{-1}\mathcal{L}_{\partial_t}k_{NA})\\
=&\,e_0\big[e_0(k_{NA})-2 k_N{}^Bk_{BA}- k_{NN}k_{NA}\big]
-2k_N{}^Be_0(k_{BA})\\
\notag &-k_{NN}\big[e_0(k_{NA})-2 k_N{}^Bk_{BA}- k_{NN}k_{NA}\big]\\
\notag e_0^2k_{NN}=&\,e_0\big[e_0(k_{NN})-4 k_N{}^Ak_{AN}-2 k_{NN}^2\big]\\
\notag &-4k_N{}^A\big[e_0(k_{NA})-2 k_N{}^Bk_{BA}- k_{NN}k_{NA}\big]\\
\notag &-2k_{NN}\big[e_0(k_{NN})-4 k_N{}^Ak_{AN}-2 k_{NN}^2\big]
\end{align}
Moreover, the Laplacian of $k_{ij}$ expands schematically to:
\begin{align}\label{Deltakij}
\Delta_gk_{ij}=\Delta_g(k_{ij})+\Gamma\star\partial k+\partial\Gamma\star k+\Gamma\star\Gamma\star k,\qquad \Delta_g(k_{ij})=g^{ab}\partial_a\partial_bk_{ij}-g^{ab}\Gamma_{ab}^c\partial_ck_{ij},
\end{align}
yielding second order terms in $g$, in addition to the Ricci terms in the RHS of \eqref{boxk}. 

Thus, we may write \eqref{boxk} in the following form:
\begin{align}\label{boxkij2}
(e_0^2-\Delta_g)k_i{}^j=\mathcal{N}(\Phi_{1;0;2},g_{0;0;2},k_{0;0;1},k_{1;0;0})_i{}^j,&&i,j=A,B,N,
\end{align}
where we use the notation $\mathcal{N}[\{(f_i)_{p_i;r_i;l_i}\}]$ to denote a non-linear expression in the $f_i$'s and their derivatives up to order $p_i+r_i+l_i$ respectively, $p_i$ time derivatives, $r_i$ derivatives among $\partial_1,\partial_2$ and the additional $l_i$ among $N,\partial_1,\partial_2$. If either of $p_i,r_i,l_i$ is less than zero, then the corresponding term is not taken into account. The number of derivatives in each term, summing up the derivatives of each factor,
does not surpass $\max\{p_i+r_i+l_i\}$. 

We commute \eqref{boxkij2} with the tangential vector fields to the boundary: $\partial_t^{r_2}\partial^{r_1}$, $\partial=\partial_1,\partial_2$. The commuted set of equations reads: 
\begin{align}\label{boxkdiff}
\notag(e_0^2-\Delta_g)\partial_t^{r_2}\partial^{r_1}(k_A{}^B)=&\,\mathcal{N}(\Phi_{r_2+1;r_1;2},g_{0;r_1;2},k_{r_2-1;r_1;2},k_{r_2;r_1-1;2},k_{r_2+1;r_1;0})_A^B\\
(e_0^2-\Delta_g)\partial_t^{r_2}\partial^{r_1}(k_{NA})=&\,\mathcal{N}(\Phi_{r_2+1;r_1;2},g_{0;r_1;2},k_{r_2-1;r_1;2},k_{r_2;r_1-1;2},(k)_{r_2+1;r_1;0})_{NA}\\
\notag(e_0^2-\Delta_g)\partial_t^{r_2}\partial^{r_1}(k_{NN})=&\,\mathcal{N}(\Phi_{r_2+1;r_1;2},g_{0;r_1;2},k_{r_2-1;r_1;2},k_{r_2;r_1-1;2},k_{r_2+1;r_1;0})_{NN}
\end{align}
\begin{remark}
The term $g_{0;r_1;2}$ has no time derivatives, since we may replace a time derivative of $g$ in favour of $\Phi,k$, using \eqref{1stvar}.
\end{remark}
Since the boundary data for $\hat{k}_A{}^B=k_A{}^B-\frac{1}{2}\delta_A{}^Bk_C{}^C$ are given, see Lemma \ref{lem:bd}, we may modify $\hat{k}_A{}^B$ such that it has zero Dirichlet boundary data:
\begin{align}\label{tildek}
\tilde{k}_A{}^B=\hat{k}_A{}^B-f_A{}^B,
\end{align}
where $f_A{}^B$ is a smooth extension in $\mathcal{M}$ of $\hat{k}_A{}^B\big|_{\mathcal{T}}$. Then the system \eqref{boxkdiff} becomes:
\begin{align}\label{boxkdiff2}
\notag(e_0^2-\Delta_g)\partial_t^{r_2}\partial^{r_1}(\tilde{k}_A{}^B)=&\,\mathcal{N}(\Phi_{r_2+1;r_1;2},g_{0;r_1;2},k_{r_2-1;r_1;2},k_{r_2;r_1-1;2},
k_{r_2+1;r_1;0},f_{r_2+2;r_1;0},f_{r_2;r_1;2})_A^B\\
(e_0^2-\Delta_g)\partial_t^{r_2}\partial^{r_1}(k_C{}^C)=&\,\mathcal{N}(\Phi_{r_2+1;r_1;2},g_{0;r_1;2},k_{r_2-1;r_1;2},k_{r_2;r_1-1;2},
k_{r_2+1;r_1;0},f_{r_2+2;r_1;0},f_{r_2;r_1;2})_C^C\\
\notag(e_0^2-\Delta_g)\partial_t^{r_2}\partial^{r_1}(k_{NA})=&\,\mathcal{N}(\Phi_{r_2+1;r_1;2},g_{0;r_1;2},k_{r_2-1;r_1;2},k_{r_2;r_1-1;2},
k_{r_2+1;r_1;0},f_{r_2+2;r_1;0},f_{r_2;r_1;2})_{NA}\\
\notag(e_0^2-\Delta_g)\partial_t^{r_2}\partial^{r_1}(k_{NN})=&\,\mathcal{N}(\Phi_{r_2+1;r_1;2},g_{0;r_1;2},k_{r_2-1;r_1;2},k_{r_2;r_1-1;2},
k_{r_2+1;r_1;0},f_{r_2+2;r_1;0},f_{r_2;r_1;2})_{NN}
\end{align} 
where $k$ in the RHS includes also $\tilde{k}_A{}^B$.

The corresponding differentiated equations for $g_{ij}$ and $\Phi$ read:
\begin{align}
\label{1stvardiff}\partial_tN^l\partial^rg_{ij}=&\,-2N^l\partial^r(\Phi k_{ij})+\mathcal{N}(k_{0;0;l-1},\Phi_{0;0;l-1},g_{0;r;l})_{ij}\\
\label{DeltaPhidiff}h^{AB}\partial_A\partial_B\partial_t^{r_2}\partial^{r_1}\Phi+NN\partial_t^{r_2}\partial^{r_1}\Phi=&\,\partial_t^{r_2}\partial^{r_1}(|k|^2\Phi)+\mathcal{N}(\Phi_{r_2-1;r_1;2},\Phi_{r_2;r_1-1;2},g_{0;r_1;1},k_{r_2-1;r_1;1})
\end{align}
for every $i,j=1,2,3$.
\begin{remark}
The top order terms in the RHS of \eqref{boxkdiff2}, containing $r_1+2$ spatial derivatives of $g$, cannot be directly estimated in $L^2$ in terms of the energy of the wave operator in the LHS, since \eqref{1stvardiff} does not gain a derivative in space. We show how to treat these terms in the energy estimates, using the structure of the equations, in the proof of Proposition \ref{prop:kenest}.
\end{remark}
We will also use the boundary conditions \eqref{kNAbdcond}-\eqref{kCCbdcond}, commuted with $\partial_t^{r_2}\partial^{r_1}$:
\begin{align}
\label{kNAbdconddiff}N\partial_t^{r_2}\partial^{r_1}k_{NA}
=-\partial_B\partial_t^{r_2}\partial^{r_1}f_A{}^B-\frac{1}{2}\partial_A\partial_t^{r_2}\partial^{r_1}k_C{}^C
+\mathcal{N}(k_{r_2;r_1-1;1},k_{r_2-1;r_1;1},g_{0;r_1;1},\Phi_{r_2;r_1;1})\\
\label{kCCbdconddiff}N\partial_t^{r_2}\partial^{r_1}k_{NN}-N\partial_t^{r_2}\partial^{r_1}k_A{}^A=-2\partial^A\partial_t^{r_2}\partial^{r_1}k_{NA}+\mathcal{N}(k_{r_2;r_1-1;1},k_{r_2-1;r_1;1},g_{0;r_1;1},\Phi_{r_2;r_1;1}),
\end{align}
\begin{remark} 
The boundary term containing $g_{0;r_1;1}$ cannot be directly absorbed in $L^2$ by the energy of the system \eqref{boxkdiff2}, via a trace inequality, at top order $r_2=0$. However, we may use the fact that $g$ gains a derivative in $\partial_t$ to make a trade off and close the energy estimates for $k$, see the proof of Proposition \ref{prop:kenest}.
\end{remark}

\section{Local existence}\label{sec:enest}

Our main goal in this section is to show how to derive energy estimates for the system \eqref{1stvar},\eqref{boxk},\eqref{Phieq}, subject to the boundary conditions \eqref{Phibdcond},\eqref{kNNbdcond},\eqref{kNAbdcond},\eqref{kCCbdcond}. In the end of this section we outline the steps that upgrade these energy estimates to a Picard iteration argument, hence, proving local existence for the reduced system of equations in the same energy spaces. The fact that the Ricci tensor of a solution to the reduced system vanishes is then demonstrated in the next section, which completes the proof of Theorem \ref{mainthm}. 

 {First, we argue that the problem can be localised in a neighbourhood of the boundary by realizing the following three steps:
 
(P1) Consider the solution ${\bf g}_1$ to the EVE in the domain of dependence $D(\Sigma_0)$ of the initial hypersurface $\Sigma_0$. We may consider a timelike hypersurface $\mathcal{T}_{ind}:=\{x^3=\varepsilon\}$, for some $\varepsilon>0$.
\begin{figure}[h!]
  \centering
    \includegraphics[width=0.4\textwidth]{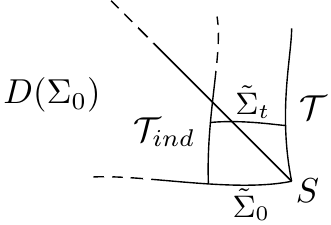}
      \caption{The domain of dependence $\mathcal{D}(\Sigma_0)$.}
\label{Region1}
\end{figure}

(P2) Then, we restrict our attention to the region bounded between $\mathcal{T}_{ind},\mathcal{T}$. In particular, we solve the reduced equations \eqref{1stvar}, \eqref{boxk}, \eqref{Phieq}, by imposing the boundary conditions \eqref{Phibdcond}, \eqref{kNNbdcond}, \eqref{kNAbdcond}, \eqref{kCCbdcond} on the artificial timelike boundary $\mathcal{T}_{ind}$, as well as considering  any\footnote{ {We could also consider the induced data on $\mathcal{T}_{ind}$ from the solution in the domain of dependence region $D(\Sigma_0)$. However, it makes no difference for our argument, since we discard part of the solution to the reduced equations near the artificial boundary.}} regular Dirichlet boundary data for $\hat{k}_A{}^B$,
everything defined with respect to a maximal foliation $\tilde{\Sigma}_t$, as depicted in Figure \ref{Region2}. 
\begin{figure}[h!]
  \centering
    \includegraphics[width=0.4\textwidth]{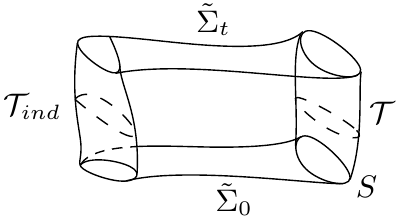}
      \caption{The region between $\mathcal{T}_{ind},\mathcal{T}$.}
\label{Region2}
\end{figure}

(P3) After having solved the above reduced system of equations, in the  region between $\mathcal{T}_{ind},\mathcal{T}$, and have concluded that it consitutes a solution to the EVE, ${\bf g}_2$, see Section \ref{sec:verEVE}, we then define our final vacuum Lorentzial manifold by considering the metric
\begin{align}\label{finalg}
{\bf g}=\left\{\begin{array}{ll}
{\bf g}_1,&D(\Sigma_0)\\
{\bf g}_2,&D(\tilde{\Sigma}_0)\cup D_{free}
\end{array}\right.,
\end{align}
derived from the two solutions ${\bf g}_1,{\bf g}_2$ in the union of the three regions depicted in Figure \ref{Region3}. The fact that ${\bf g}$ is well-defined follows from the classical geometric uniqueness for the initial value problem in the domain dependence of $\tilde{\Sigma}_0$, which implies that ${\bf g}_1,{\bf g_2}$ are isometric in $D(\tilde{\Sigma}_0)$. 
\begin{figure}[h!]
  \centering
    \includegraphics[width=0.3\textwidth]{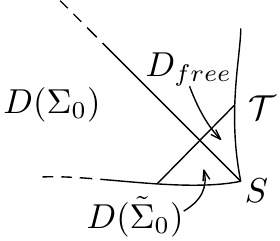}
      \caption{The domain $D(\Sigma_0)\cup D_{free}$ of the resulting solution.}
\label{Region3}
\end{figure}

\noindent
The domain of ${\bf g}$ obviously covers a future spacetime  neighbourhood of $\Sigma_0$. This completes our localisation procedure.

In the rest of this section, we treat the second part (P2), solving the reduced equations in the cylindrical region between $\mathcal{T}_{ind},\mathcal{T}$.
}

Suppose that the boundary values of $\hat{k}_A{}^B$ on $\mathcal{T}_{ind}$ have been incorporated in the definition \eqref{tildek} of $\tilde{k}_A{}^B$, such that $\tilde{k}_A{}^B$ has homogeneous Dirichlet boundary data on $\mathcal{T}_{ind}$ as well. \\
We will be working with the following energies: 
\begin{align}
\label{Etotal}
E_{total}(t):=&\,E_k(t)+\sum_{i,j=1}^3\|g_{ij}\|^2_{H^{r+1}(\tilde{\Sigma}_t)}+\|\Phi\|_{H^{r+2}(\tilde{\Sigma}_t)}^2+\sum_{i=0}^r\|\partial_t^{i+1}\Phi\|_{H^{r+1-i}(\tilde{\Sigma}_t)}^2,\\
\notag
E_k(t):=&\sum_{r_1+r_2\leq r}\int_{\tilde{\Sigma}_t}\bigg(\sum_{A,B}\big[(e_0\partial_t^{r_2}\partial^{r_1}\tilde{k}_A{}^B)^2+|\nabla\partial_t^{r_2}\partial^{r_1}\tilde{k}_A{}^B|^2_g\big]+(e_0\partial_t^{r_2}\partial^{r_1}k_C{}^C)^2\\
&+|\nabla\partial_t^{r_2}\partial^{r_1}k_C{}^C|^2_g
\label{Ek}+4h^{AB}\big[e_0\partial_t^{r_2}\partial^{r_1}k_{NA}e_0\partial_t^{r_2}\partial^{r_1}k_{NB}
+\partial^i\partial_t^{r_2}\partial^{r_1}k_{NA}\partial_i\partial_t^{r_2}\partial^{r_1}k_{NB}\big]\\
&+(e_0\partial_t^{r_2}\partial^{r_1}k_{NN})^2+|\nabla\partial_t^{r_2}\partial^{r_1}k_{NN}|^2_g\bigg)\mathrm{vol}_{\tilde{\Sigma}_t},
\notag
\end{align}
where $|\nabla u|_g^2=g^{ij}\partial_iu\partial_ju$, $\mathrm{vol}_{\tilde{\Sigma}_t}$ is the intrinsic volume form, and 
\begin{align}\label{Hr}
\|u\|_{H^r(\tilde{\Sigma}_t)}^2:=\sum_{r_1+r_2\leq r}\int_{\tilde{\Sigma}_t}(N^{r_2}\partial^{r_1}u)^2\mathrm{vol}_{\tilde{\Sigma}_t}.
\end{align}
 {As part of our assumption, $g$ is a Riemannian metric initially. In what follows, we fix an $r\ge3$.}

At this point we make the assumption that a solution to the reduced system  \eqref{1stvar},\eqref{boxk},\eqref{Phieq} exists, it lives in the above energy space and it satisfies
\begin{align}\label{bootstrap}
E_{total}(t)\leq C_0,&&t\in[0,T_0],
\end{align}
for some $C_0,T_0>0$. We will then show that \eqref{bootstrap} implies the estimate 
\begin{align}\label{Etotalest}
E_{total}(t)\leq C [E_{total}(0)+C_{b.d.}],&&t\in[0,T],
\end{align}
for some $T\leq T_0$ sufficiently small, depending on $C_0$, where $C>0$ is a uniform constant independent of $C_0$, and $C_{b.d.}>0$ depends continuously on the $L^\infty\big([0,T];H^{r+2-i}(\tilde{\Sigma}_t)\big)$ norm of $\partial_t^if_A{}^B$, $i=0,\ldots,r+2$, $A,B=1,2$. 
Hence, by choosing $C_0=C [E_{total}(0)+C_{b.d.}]$ and $T_0=T$ in the first place, our argument can be transformed into an iteration scheme, see the discussion in the end of the section.
\begin{lemma}\label{lem:Econtrol}
The energy $E_{total}(t)$ controls the corresponding energies for $k$ that include $N$ derivatives: 
\begin{align}\label{Econtrol}
\sup_{\tau\in[0,t]}\int_{\tilde{\Sigma}_\tau}(e_0\partial^{r_3}_\tau N^{r_2}\partial^{r_1}u)^2+|\nabla \partial_\tau^{r_3}N^{r_2}\partial^{r_1}u|^2_g\mathrm{vol}_{\tilde{\Sigma}_\tau}\lesssim \sup_{\tau\in[0,t]}E_{total}(\tau)+C_{b.d.},
\end{align}
for all $r_3+r_2+r_1\leq r$, $u=\tilde{k}_A{}^B,k_C{}^C,k_{NA},k_{NN}$, $A,B=1,2$, the implicit constant depending on $C_0$. 
\end{lemma}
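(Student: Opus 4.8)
I want to show that the total energy $E_{total}(t)$, which by definition controls $L^2$ norms of $k$-derivatives involving at most tangential derivatives $\partial_t^{r_2}\partial^{r_1}$ (plus the mixed $N$-derivative information hidden in the $g,\Phi$ energies), actually dominates the full $L^2$ norms involving arbitrary combinations $\partial_\tau^{r_3} N^{r_2}\partial^{r_1}$ up to total order $r$, for each of the components $u=\tilde k_A{}^B, k_C{}^C, k_{NA}, k_{NN}$.

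**Strategy.** The plan is a downward induction on the number $r_2$ of normal derivatives $N$ appearing. The base case $r_2=0$ is essentially the definition of $E_k(t)$ together with $E_{total}(t)$: when no $N$ hits $u$, the quantity $(e_0\partial_\tau^{r_3}\partial^{r_1}u)^2 + |\nabla \partial_\tau^{r_3}\partial^{r_1}u|_g^2$ is one of the integrands already summed in \eqref{Ek} (note that $|\nabla \cdot|_g^2$ includes one $N$-derivative, which is harmless). First I would confirm that the pure-tangential case is controlled with no extra work. The inductive step is where the equations enter: to control a term with $r_2\geq 1$ normal derivatives, I would peel off one factor of $N$ and recover it from the wave equation \eqref{boxkij2}. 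Schematically, the principal part $e_0^2 - \Delta_g$ contains $g^{33}NN u$ as its only second-order-in-$N$ piece (recall from \eqref{dx3} that $\partial_3$ is a combination of $N$ and tangential fields), so $NN u$ can be solved in terms of $e_0^2 u$, tangential second derivatives, and the nonlinear right-hand side $\mathcal N$. Differentiating this relation by $\partial_\tau^{r_3-0}N^{r_2-2}\partial^{r_1}$ expresses any quantity with $r_2$ normal derivatives in terms of quantities with at most $r_2-1$ normal derivatives (either acting on $u$, or on the lower-order factors $g,\Phi$ which are already controlled through the remaining terms of $E_{total}$), closing the induction.

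**Key steps in order.** \emph{(i)} Establish the base case $r_2=0$ directly from \eqref{Ek}. \emph{(ii)} Solve the commuted wave equation \eqref{boxkij2}, in the form \eqref{boxkdiff2}, for $NN$ acting on each component, using that $g^{33}\neq 0$ (a consequence of $N$ being the unit normal and $g$ Riemannian, guaranteed by the bootstrap \eqref{bootstrap} on $[0,T_0]$). \emph{(iii)} Observe that $e_0 = \Phi^{-1}\partial_t$ converts a $\partial_t$-derivative into a time derivative controlled by $E_k$, while the $\Delta_g$ and $\mathcal N$ terms only ever cost tangential derivatives of $u$ and up-to-two-fewer-normal-derivatives, plus derivatives of $g,\Phi$ whose $N$-derivatives up to the required order are supplied by the norms $\|g_{ij}\|_{H^{r+1}}$, $\|\Phi\|_{H^{r+2}}$ and the $\partial_t^{i+1}\Phi$ terms in $E_{total}$. \emph{(iv)} Feed this into the downward induction, being careful that each application drops the $N$-count by exactly two, so I should run the induction in steps of two with both $r_2$ even and odd base layers ($r_2=0$ and $r_2=1$); the $r_2=1$ layer, i.e. a single normal derivative $Nu$, is controlled because $|\nabla \partial_\tau^{r_3}\partial^{r_1} u|_g^2$ in \eqref{Ek} already carries one normal derivative.

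**Main obstacle.** The delicate point is derivative counting at top order. The right-hand side $\mathcal N$ in \eqref{boxkdiff2} contains the schematically dangerous term $g_{0;r_1;2}$ — two spatial (hence possibly normal) derivatives of $g$ at tangential order $r_1$ — and since \eqref{1stvar} does not gain a spatial derivative for $g$, one must verify that recovering $NNu$ never forces more than $r$ total derivatives on any single factor, nor more normal derivatives of $g,\Phi$ than $E_{total}$ actually controls (namely $r+1$ for $g$ and $r+2$ for $\Phi$). I expect the bookkeeping to work precisely because solving for $NNu$ trades \emph{two} of $u$'s normal derivatives for \emph{two} time derivatives (via $e_0^2$) plus lower-normal-order data, so the normal-derivative budget on $u$ strictly decreases while the budget on $g,\Phi$ stays within the $H^{r+1}$, $H^{r+2}$ allowances built into $E_{total}$. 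Making this counting airtight — in particular confirming that the $C_{b.d.}$ contributions from the extension $f_A{}^B$ in \eqref{tildek} stay within the claimed bound — is the main thing to check, and the implicit constant's dependence on $C_0$ enters exactly through the $L^\infty$ control of low-order factors of $g,\Phi,k$ provided by \eqref{bootstrap}.
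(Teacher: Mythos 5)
Your overall strategy coincides with the paper's: rearrange the commuted wave equation \eqref{boxkdiff2} so as to solve for $NN\partial_t^{r_3}\partial^{r_1}u$ (this is exactly the paper's \eqref{NNkij}, using $\Delta_g = NN + h^{AB}\partial_A\partial_B + \text{l.o.t.}$), take the pure-tangential layer (and the single-$N$ layer, via the $|\nabla\cdot|_g$ terms) of \eqref{Ek} as the base case, and run a finite induction on the number of normal derivatives, estimating the nonlinear terms by Sobolev embedding together with the bootstrap \eqref{bootstrap}, with the extension $f_A{}^B$ accounting for $C_{b.d.}$. Up to the direction in which you organize the induction, your steps (i)--(iii) are the paper's proof.

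The genuine gap is exactly at the point you defer. You assert (``I expect the bookkeeping to work\ldots'') that the spatial-derivative budget on $g$ stays within the $H^{r+1}$ allowance of $E_{total}$. The paper states the opposite: the term $g_{0;r_1;2}$ at top order $r_1=r$ carries $r+2$ spatial derivatives of $g$, one more than $\|g_{ij}\|_{H^{r+1}(\tilde{\Sigma}_t)}$ controls, and this cannot be fixed by counting alone since \eqref{1stvardiff} does not gain a spatial derivative, and since the lemma is later applied (e.g.\ in \eqref{bdtermest2} and \eqref{bdtermsest3}) to quantities of total order $r+2$, which forces \eqref{NNkij} to be used up to the full range $r_3+r_1\le r$. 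The missing idea is the paper's device for this critical term: replace the offending derivative of $g$ by its initial value plus the time integral of the right-hand side of \eqref{1stvardiff}; the resulting top-order terms in $k$ then carry a factor of $t$, so after taking $\sup_{\tau\in[0,t]}$ --- note that \eqref{Econtrol} is stated with suprema on both sides precisely to permit this --- they can be absorbed into the left-hand side for $T$ small. Without this step (or an alternative such as the integration by parts used in \eqref{IBPterm}), the induction does not close at top order, because the critical $g$-factor cannot be placed in $L^2$ using $E_{total}$ alone; so your proposal, while structurally correct, is incomplete at the one non-routine point of the argument.
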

\begin{proof}
From the wave equations \eqref{boxkdiff2} we have 
\begin{align}\label{NNkij}
\begin{split}
NN\partial^{r_3}_t\partial^{r_1}u=&\,\mathcal{N}(\Phi_{r_3+1;r_1;2},g_{0;r_1;2},u_{r_3-1;r_1;2},u_{r_3;r_1-1;2},
u_{r_3+1;r_1;0},f_{r_3+2;r_1;0},f_{r_3;r_1;2})\\
&+e_0^2\partial^{r_3}_t\partial^{r_1}u-h^{AB}\partial_A\partial_B\partial^{r_3}_t\partial^{r_1}u,
\end{split}
\end{align}
where $u$ in $\mathcal{N}$ can be any from $\tilde{k}_A{}^B,k_C{}^C,k_{NA},k_{NN}$.

Thus, by iteratively differentiating \eqref{NNkij} with $N^{r_2}$ and taking the $L^2$ norms of both sides, we derive \eqref{Econtrol} by finite induction in $r_2$. The non-linear terms can be estimated in the standard way by making use of the classical Sobolev inequality $\|u\|_{L^\infty(\tilde{\Sigma}_t)}\lesssim \|u\|_{H^2(\tilde{\Sigma}_t)}$ and \eqref{bootstrap}. As for the term $g_{0;r_1;2}$, at top order $r_1=r$, which has one more derivative compared to the norm in the definition \eqref{Etotal} of the energy $E_{total}(t)$, we may replace it by its initial value and the time integral of the RHS of \eqref{1stvardiff}. The arising top order terms in $k$ have then a smallness in $t$, after taking the $\sup_{\tau\in[0,t]}$ and can therefore be absorbed in the LHS.
\end{proof}
\begin{lemma}\label{lem:gijPhiest}
The $H^s$ norms of $g_{ij},\Phi$ are controlled by the energy of $k_i{}^j$ in the following fashion:
\begin{align}\label{gijPhiest}
\notag
\sum_{i,j=1}^3\big\|g_{ij}-g_{ij}\big|_{t=0}\big\|_{H^{r+1}}^2+\big\|\Phi-\Phi\big|_{t=0}\big\|_{H^{r+2}}^2+\sum_{l=0}^{r-1}\big\|\partial_t^{l+1}\Phi-\partial_t^{l+1}\Phi\big|_{t=0}\big\|_{H^{r+2-l}}^2\\
\lesssim t \sup_{\tau\in[0,t]}E_k(\tau)+tC_{b.d.},\\
\|\partial_t^{r+1}\Phi\|_{H^2}^2\lesssim E_k(t)+C_{b.d.},
\notag
\end{align}                                                                                                                                                                                                                                                                                                                                                                                                                                                                                                                                                                                                                                                                                                                                                                                                                                                                                                                                                                                                                                                                                                                                            
for all $t\in[0,T]$, the implicit constant depending on $C_0$ only in the first inequality, provided $T>0$ is sufficiently small.
\end{lemma}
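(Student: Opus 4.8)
The plan is to treat the three families of quantities—$g_{ij}$, the intermediate time derivatives $\partial_t^{l+1}\Phi$ for $l\le r-1$, and the top time derivative $\partial_t^{r+1}\Phi$—by different mechanisms, all ultimately feeding off the $k$-energy $E_k$ together with Lemma \ref{lem:Econtrol} and the bootstrap bound \eqref{bootstrap}. For $g_{ij}$ and the intermediate lapse derivatives I would extract smallness in $t$ by writing each quantity as $g_{ij}|_{t=0}+\int_0^t\partial_t g_{ij}\,d\tau$ (resp. an analogous Duhamel formula in $\partial_t$), so that its distance from the initial data is $O(t)$ times a quantity controlled by the energies. The top derivative $\partial_t^{r+1}\Phi$, by contrast, has no room left to integrate one more time derivative while staying inside the range of derivatives controlled by $E_k$, and must instead be bounded directly—hence the absence of a $t$-factor and the appearance of only the lowest spatial norm $H^2$ in the second inequality.

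For the metric, I would differentiate and integrate \eqref{1stvardiff}. Taking $L^2$ norms of $N^l\partial^{r_1}(g_{ij}(t)-g_{ij}|_{t=0})=\int_0^t[\,\cdot\,]\,d\tau$ over $l+r_1\le r+1$ gives $\|g_{ij}-g_{ij}|_{t=0}\|_{H^{r+1}}\lesssim t\sup_{\tau}\|\text{RHS}\|_{L^2}$. The leading contribution to the right-hand side is $\Phi\,k_{ij}$ differentiated $r+1$ times, whose spatial $H^{r+1}$ norm involves $k$ at total order $r+1$, including up to $r+1$ normal derivatives; these are exactly the quantities controlled in Lemma \ref{lem:Econtrol}, so the top-order factor is bounded by $\sup_\tau E_{total}(\tau)+C_{b.d.}$, while the factor $\Phi$ and all lower-order $\mathcal{N}$-terms are bounded through \eqref{bootstrap}. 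Squaring yields $\|g_{ij}-g_{ij}|_{t=0}\|_{H^{r+1}}^2\lesssim t^2(\sup_\tau E_{total}+C_{b.d.})$.

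For the lapse I would use elliptic regularity for \eqref{Phieq} and its $\partial_t$-commuted form \eqref{DeltaPhidiff}, proceeding by induction on the number of time derivatives. The operator $\Delta_g-|k|^2$ has a favourable sign, and the Dirichlet data of every $\partial_t^{m}\Phi$ with $m\ge1$ vanishes on the boundaries since $\Phi\equiv1$ there by \eqref{Phibdcond}; hence the homogeneous elliptic estimate $\|\partial_t^{l+1}\Phi\|_{H^{r+2-l}}\lesssim\|F_{l+1}\|_{H^{r-l}}+(\text{lower time-order})$ holds with no uncontrolled boundary contribution. A derivative count shows the source $F_{l+1}=\partial_t^{l+1}(|k|^2\Phi)+\mathcal{N}(\cdots)$ only ever requires $k$ at total order $\le r+1$, i.e. the range covered by $E_k$ and Lemma \ref{lem:Econtrol}; its top term $\sim k\,\Phi\,\partial_t^{l+1}k$ is bounded by $E_k^{1/2}$ (up to $C_{b.d.}$ from the $f$-terms in \eqref{tildek}) after invoking the bootstrap. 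Taking $l=r$ directly yields the second inequality $\|\partial_t^{r+1}\Phi\|_{H^2}^2\lesssim E_k+C_{b.d.}$, while for $l\le r-1$ I would insert the Duhamel formula $\partial_t^{l+1}\Phi(t)-\partial_t^{l+1}\Phi|_{t=0}=\int_0^t\partial_t^{l+2}\Phi\,d\tau$ to convert the elliptic bound into the claimed $t$-small estimate.

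The main obstacle is the apparent circularity: the top-order $k$-factors are controlled only via Lemma \ref{lem:Econtrol}, whose right-hand side is $\sup_\tau E_{total}=\sup_\tau(E_k+\|g\|^2+\|\Phi\|^2)$ and thus contains the very $g,\Phi$ norms being estimated. The resolution is precisely the $t$-smallness produced by the time integrations: after collecting all contributions one arrives at $\|g-g_0\|_{H^{r+1}}^2+\sum_{l\le r-1}\|\partial_t^{l+1}\Phi-\partial_t^{l+1}\Phi|_{t=0}\|_{H^{r+2-l}}^2\lesssim t^2\big(\sup_\tau E_k+\sup_\tau(\|g\|^2+\|\Phi\|^2)+C_{b.d.}\big)$, and taking $\sup_{\tau\in[0,t]}$ of the left-hand side lets the term $t^2\sup_\tau(\|g\|^2+\|\Phi\|^2)$ be absorbed once $T$ is small enough depending on $C_0$. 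This leaves $\lesssim t^2\sup_\tau E_k+t^2 C_{b.d.}\le t\sup_\tau E_k+tC_{b.d.}$ for $t\le1$, which is the first inequality; the top derivative $\partial_t^{r+1}\Phi$ needs no such absorption since it is estimated directly in $H^2$ through the good sign of $\Delta_g-|k|^2$.
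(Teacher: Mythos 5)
Your proposal is correct and follows essentially the same route as the paper's proof: integrating \eqref{1stvardiff} in time for the metric part (your sup-and-absorb for small $T$ replacing the paper's Gronwall step, an equivalent device for the self-referential top-order $g$-terms), elliptic estimates for \eqref{DeltaPhidiff} with the homogeneous Dirichlet data \eqref{Phibdcond} and an induction up to the full norms for the lapse, time-integration to produce the $t$-smallness of the sub-top lapse derivatives, and a direct $H^2$ elliptic bound for $\partial_t^{r+1}\Phi$. The only caveat — shared with the paper's own argument, hence not a gap relative to it — is that the Duhamel step controls $\partial_t^{l+1}\Phi-\partial_t^{l+1}\Phi\big|_{t=0}$ only in the norm in which the integrand $\partial_t^{l+2}\Phi$ is elliptically controlled, namely $H^{r+1-l}$, one derivative below the stated $H^{r+2-l}$.
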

\begin{proof}
The part of the estimate involving $g_{ij}$ follows immediately by integrating \eqref{1stvardiff} in $[0,t]$, taking the $L^2$ norms of both sides and applying Gronwall's inequality. To estimate $\Phi$ and its derivatives that appear in \eqref{1stvardiff}, we utilise \eqref{bootstrap}.

On the other hand, from \eqref{DeltaPhidiff} and standard elliptic estimates\footnote{Note that the original elliptic operator acting on $\Phi-1$, cf. \eqref{Phieq}, has trivial kernel.} for the difference $\Phi-1$ with homogeneous Dirichlet boundary data \eqref{Phibdcond}, treating the factors $g_{0;r_1;1}$ as coefficients by virtue of \eqref{bootstrap}, we have
\begin{align}\label{Phiest}
\sum_{r_1+r_2\leq r}\|\partial_t^{r_2}\partial^{r_1}\Phi\|_{H^2}^2+\|\partial_t^{r+1}\Phi\|^2_{H^2}\lesssim E_k(t)+C_{b.d.}
\end{align}
We may also differentiate the equation \eqref{DeltaPhidiff} with $N^{r_3}$ and obtain the estimate
\begin{align}\label{Phiest2}
\|\Phi\|_{H^{r+2}}^2+\sum_{l\leq r}\|\partial_t^{l+1}\Phi\|_{H^{r+2-l}}^2\lesssim E_k(t)+C_{b.d.},
\end{align}
by induction in $r_3$, cf. Lemma \ref{lem:Econtrol}.

The improved bounds now, with smallness in $t$, for the lapse terms $\partial_t^l\Phi$, $l\leq r$, follow by integrating $\partial_t^{l+1}\Phi$ in $[0,t]$ and using the elliptic estimate \eqref{Phiest2} for the integrand. These in turn can be used to rederive \eqref{Phiest2} with an implicit constant that does not depend on $C_0$. In fact, all the less than top order terms in $g,k,\Phi$ that are viewed as coefficients can be estimated by their initial values plus $tC_0$, using \eqref{bootstrap}.
\end{proof}
Now we can proceed to the proof of the energy estimate \eqref{Etotalest}. 
According to the previous lemma, the part of $E_{total}(t)$ that corresponds to $g,\Phi$ can be essentially controlled by $E_k(t)$. Hence, the overall estimate \eqref{Etotalest} reduces to the corresponding one for $E_k(t)$.
\begin{proposition}\label{prop:kenest}
The energy $E_k(t)$ satisfies the estimate:
\begin{align}\label{Ekest}
E_k(t)\leq C[E_{total}(0)+C_{b.d.}],&&t\in[0,T],
\end{align}
for a $T>0$ sufficiently small and a constant $C>0$ independent of $C_0$.
\end{proposition}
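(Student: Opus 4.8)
The plan is to run the classical wave-equation energy argument on each commuted equation in \eqref{boxkdiff2}, with multiplier $e_0(\cdot)$ (equivalently $\partial_t=\Phi e_0$), integrated over the spacetime slab bounded by $\tilde{\Sigma}_0$, $\tilde{\Sigma}_t$ and the two timelike boundaries $\mathcal{T}$, $\mathcal{T}_{ind}$. For each scalar $w=\partial_t^{r_2}\partial^{r_1}u$ obeying $(e_0^2-\Delta_g)w=F$, this yields an identity of the schematic form
\begin{align*}
E[w](t)=E[w](0)+\int_0^t\!\!\int_{\tilde{\Sigma}_\tau}F\,e_0w+(\text{bulk errors})-\mathrm{Flux}_{\mathcal{T}}[w]-\mathrm{Flux}_{\mathcal{T}_{ind}}[w],
\end{align*}
where $E[w]$ is the matching summand of \eqref{Ek}, the bulk errors arise from commuting $e_0$ through the energy density against the $t$-dependent $\Phi,g$, and each lateral flux is, to leading order, $\mathrm{Flux}_{\mathcal{T}}[w]\sim\int_{\mathcal{T}}N(w)\,e_0w$. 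Summing over $r_1+r_2\le r$ and over the four families of components with the weights of \eqref{Ek} produces the identity for $E_k(t)$. Both lateral boundaries are handled identically, since by construction (P2) the quantity $\tilde{k}_A{}^B$ carries homogeneous Dirichlet data on each.

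For the bulk I would use Cauchy--Schwarz and Grönwall, bounding every factor of the nonlinearities $F=\mathcal{N}(\cdots)$ by $E_k$ via Lemma \ref{lem:Econtrol}, which recovers the normal derivatives from the equation, and Lemma \ref{lem:gijPhiest}, which controls $g,\Phi$ by $E_k$ with a gain of smallness in $t$. The only genuinely delicate bulk contribution is the top-order term carrying $r_1+2$ spatial derivatives of $g$ through the Ricci terms of \eqref{boxk}, which exceeds the $H^{r+1}$ control of $g$ in \eqref{Etotal} (cf. Remark \ref{rem:loss}); I would integrate it by parts in $\tilde{\Sigma}_\tau$ to move one derivative onto $e_0w$ and then invoke \eqref{1stvardiff}, i.e. $\partial_tg=-2\Phi k$, to replace the remaining top-order spatial derivative of $g$ by its datum plus a time integral of $k$-derivatives, so that it is either initial data or comes with a power of $t$.

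The heart of the argument is the lateral flux. The $\tilde{k}_A{}^B$-summand drops out: homogeneous Dirichlet data force $w\equiv0$ on the boundary, and since $e_0=\Phi^{-1}\partial_t$ is tangent there, also $e_0w=0$, so $\int N(w)e_0w=0$. For the remaining components I would feed in the commuted boundary conditions: \eqref{kNAbdconddiff} expresses $N(w_{NA})$ as $-\tfrac12\partial_Aw_{\mathrm{tr}}$ plus tangential derivatives of $f_A{}^B$ (absorbed into $C_{b.d.}$) and lower order, \eqref{kNNbdcond} gives $w_{NN}=-w_{\mathrm{tr}}$ and hence $e_0w_{NN}=-e_0w_{\mathrm{tr}}$, and \eqref{kCCbdconddiff} gives $N(w_{NN})-N(w_{\mathrm{tr}})=-2\partial^Aw_{NA}$ plus lower order. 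Collapsing the $k_{NN}$- and $k_C{}^C$-fluxes through these relations leaves $-2\int_{\mathcal{T}}\partial^Aw_{NA}\,e_0w_{\mathrm{tr}}$, while the weight $4h^{AB}$ in \eqref{Ek} --- tuned precisely for this cancellation --- turns the surviving top-order part of the $k_{NA}$-flux into $+2\int_{\mathcal{T}}\partial^Aw_{\mathrm{tr}}\,e_0w_{NA}$. Since $\Phi=1$ on $\mathcal{T}$ by \eqref{Phibdcond}, there $e_0=\partial_t$, so both $\partial^A$ and $e_0$ are tangent to $\mathcal{T}$; a tangential integration by parts then cancels these two top-order terms against each other, up to lower-order commutator terms (from the $t$-dependence of $q_t$) and boundary-of-boundary terms $\int_{S_t}-\int_{S_0}$ over the two-dimensional sections at the endpoints of the $t$-integration.

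What remains is where I expect the real difficulty. The $\int_{S_0}$ term is initial data bounded by $E_{total}(0)$, but the $\int_{S_t}$ term (schematically $\int_{S_t}\partial^Aw_{\mathrm{tr}}\,w_{NA}$) sits at the level of the top-order energy, and a naive trace inequality would cost an extra derivative. This is the main obstacle, together with the analogous $g_{0;r_1;1}$ boundary contributions of \eqref{kNAbdconddiff}--\eqref{kCCbdconddiff} at top order $r_2=0$. I would resolve the latter by the same $\partial_t$-trade as in the bulk, exploiting that $g$ gains a $t$-derivative. For the former, I would first split the term using the maximal relation $w_{NN}=-w_{\mathrm{tr}}$ valid on $\mathcal{T}$, isolating the part that genuinely lives at the interior energy level, and only then apply a trace inequality of the form $\|v\|_{L^2(S_t)}^2\lesssim\e\,\|v\|_{H^1(\tilde{\Sigma}_t)}^2+C_\e\|v\|_{L^2(\tilde{\Sigma}_t)}^2$, choosing $\e$ small so the $H^1$-part is swallowed by the gradient energy already present in $E_k(t)$ and the remainder is handled by Grönwall. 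Assembling everything gives $E_k(t)\le C[E_{total}(0)+C_{b.d.}]+C\int_0^tE_k(\tau)\,d\tau+(\text{small})\,E_k(t)$; absorbing the small term and running Grönwall on a short interval $[0,T]$ yields \eqref{Ekest} with $C$ independent of $C_0$, every $C_0$-dependent coefficient having entered with a positive power of $t$.
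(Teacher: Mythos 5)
Your proposal tracks the paper's proof of Proposition \ref{prop:kenest} step by step up to the final, decisive point: the multiplier energy identity for the commuted system \eqref{boxkdiff2}; the integration by parts combined with the trade $\partial_t g=-2\Phi k$ for the top-order $g$ term (this is exactly \eqref{IBPterm}); the collapse of the lateral fluxes through \eqref{kNNbdcond}, \eqref{kNAbdconddiff}, \eqref{kCCbdconddiff}; the antisymmetric cancellation of the two surviving top-order flux terms after tangential integration by parts; and the identification of the leftover endpoint term $\int_{\partial\tilde{\Sigma}_t}\partial^A\partial_t^{r_2}\partial^{r_1}k_C{}^C\,\partial_t^{r_2}\partial^{r_1}k_{NA}$ as the crux (this is \eqref{bdtermsest}--\eqref{enineq2}). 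You even anticipate the splitting via $k_{NN}=-k_C{}^C$, which is the content of Remark \ref{rem:trace}.

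The gap is in your proposed resolution of that last term. A trace inequality of the form $\|v\|_{L^2(S_t)}^2\lesssim\e\|v\|^2_{H^1(\tilde{\Sigma}_t)}+C_\e\|v\|^2_{L^2(\tilde{\Sigma}_t)}$ cannot close this step, for two reasons. First, the integrand is a product of two different functions at unequal derivative levels: after Cauchy--Schwarz on $S_t$ you need the trace of $\partial_A\partial_t^{r_2}\partial^{r_1}k_C{}^C$, whose $H^1(\tilde{\Sigma}_t)$ norm involves $r+2$ derivatives of $k$ --- one more than $E_k$ controls --- and integrating by parts in $\partial_A$ on $S_t$ only moves this loss onto the other factor. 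Second, and more fundamentally, this term carries no free small parameter: it sits exactly at the level of $E_k(t)$ itself, so the two Cauchy--Schwarz weights are reciprocally linked ($\eta$ and $1/\eta$) and cannot both be made small; your phrase ``choosing $\e$ small so the $H^1$-part is swallowed'' presumes a freedom that does not exist here. The paper's mechanism in \eqref{bdtermsest4} is different: the two-dimensional integral is rewritten as a bulk integral over $\tilde{\Sigma}_t$ via the fundamental theorem of calculus in the $N$ direction, $N$ is commuted past $\partial_A$ and one integrates by parts in $\partial_A$ \emph{inside} $\tilde{\Sigma}_t$ so that every factor ends with at most $r+1$ derivatives, and then the constants are checked sharply: after the split $-2\partial_A k_C{}^C=\partial_A k_{NN}-\partial_A k_C{}^C$, the resulting coefficients relative to $E_k$ are $\max\{\eta/2,\,1/(4\eta)\}$ --- the $1/(4\eta)$ arising precisely because of the deliberate weight $4$ on the $k_{NA}$ terms in \eqref{Ek} --- and with $\eta=3/4$ this equals $3/8<1/2$, hence absorbable against $\tfrac12 E_k(t)$; without the split it would be $\max\{\eta,1/(4\eta)\}\ge\tfrac12$ and the absorption fails. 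So the conclusion rests on a borderline exact-constant computation, not a small-parameter trace estimate, and as written your argument does not close.
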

Combining \eqref{gijPhiest},\eqref{Ekest}, we obtain \eqref{Etotalest} for $T$ sufficiently small. 
\begin{proof}
The standard energy argument for the wave equations \eqref{boxkdiff2}, making use of Lemma \ref{lem:gijPhiest} and \eqref{bootstrap}, yields the energy inequality:
\begin{align}\label{enineq}
\notag
&\frac{1}{2}\partial_tE_k(t)
-\sum_{r_1+r_2\leq r} \int_{\partial\tilde{\Sigma}_t}\bigg(\partial_t \partial^{r_2}_t\partial^{r_1}k_C{}^C\nabla_N\partial^{r_2}_t\partial^{r_1}k_C{}^C
+4h^{AB}\partial_t \partial^{r_2}_t\partial^{r_1}k_{NA}\nabla_N\partial^{r_2}_t\partial^{r_1}k_{NB}\\
&+\partial_t \partial^{r_2}_t\partial^{r_1}k_{NN}\nabla_N\partial^{r_2}_t\partial^{r_1}k_{NN}\bigg)\mathrm{vol}_{\partial\tilde{\Sigma}_t}dt+\int_{\tilde{\Sigma}_t}\overline{\partial}^2\partial^r g\star \partial_t\partial^rk\mathrm{vol}_{\tilde{\Sigma}_t}\\
\lesssim &\,[E_k(t)+C_{b.d.}]
\notag
\end{align}
where the last term in the preceding LHS contains an excessive number of spatial derivatives of $g$ that comes from $g_{0;r_1;2}$, $r_1=r$, in the RHS of \eqref{boxkdiff2}, $\overline{\partial}=\partial_1,\partial_2,\partial_3$. In fact, this type of product contains other lower order factors, whose first derivatives are in $L^\infty$, and which we choose to suppress here for simplicity, since they do not matter. 
\begin{align}\label{IBPterm}
\notag&\int_{\tilde{\Sigma}_t}\overline{\partial}^2\partial^r g\star \partial_t\partial^rk\mathrm{vol}_{\tilde{\Sigma}_t}=-\int_{\tilde{\Sigma}_t}\overline{\partial}^2\partial^{r-1} g\star \partial_t\partial^{r+1}k+\overline{\partial}^2\partial^{r-1} g\star \partial_t\partial^rk\frac{\partial\mathrm{vol}_{\tilde{\Sigma}_t}}{\mathrm{vol}_{\tilde{\Sigma}_t}}\mathrm{vol}_{\tilde{\Sigma}_t}\\
=&-\partial_t\bigg(\int_{\tilde{\Sigma}_t}\overline{\partial}^2\partial^{r-1} g\star\partial^{r+1}k\mathrm{vol}_{\tilde{\Sigma}_t}\bigg)+\int_{\tilde{\Sigma}_t}\overline{\partial}^2\partial^{r-1} k\star \partial^{r+1}k+\overline{\partial}^2\partial^{r-1} g\star \partial_t\partial^rk\frac{\partial\mathrm{vol}_{\tilde{\Sigma}_t}}{\mathrm{vol}_{\tilde{\Sigma}_t}}\mathrm{vol}_{\tilde{\Sigma}_t}\\
\tag{by \eqref{gijPhiest}, $T$ sufficiently small}\leq&-\partial_t\bigg(\int_{\tilde{\Sigma}_t}\overline{\partial}^2\partial^{r-1} g\star \partial^{r+1}k\mathrm{vol}_{\tilde{\Sigma}_t}\bigg)+C[E_k(t)+C_{b.d.}]\notag
\end{align}
We compute the integrands in the boundary terms using 
conditions \eqref{kNNbdcond},\eqref{kNAbdconddiff},\eqref{kCCbdconddiff}:
\begin{align}\label{enkbdterms}
\notag
&\partial_t \partial^{r_2}_t\partial^{r_1}k_C{}^CN\partial^{r_2}_t\partial^{r_1}k_C{}^C+\sum_A4h^{AB}\partial_t \partial^{r_2}_t\partial^{r_1}k_{NA}N\partial^{r_2}_t\partial^{r_1}k_{NB}+\partial_t\partial^{r_2}_t\partial^{r_1}k_{NN}N\partial^{r_2}_t\partial^{r_1}k_{NN}\\
\notag=&\,\partial_t\partial^{r_2}_t\partial^{r_1}k_C{}^C(N\partial^{r_2}_t\partial^{r_1}k_C{}^C-N\partial^{r_2}_t\partial^{r_1}k_{NN})
-\sum_A4h^{AB}\partial_t \partial^{r_2}_t\partial^{r_1}k_{NA}\bigg[\partial_C\partial_t^{r_2}\partial^{r_1}f_B{}^C\\
&+\frac{1}{2}\partial_B\partial_t^{r_2}\partial^{r_1}k_C{}^C+\mathcal{N}(k_{r_2;r_1;0},k_{r_2-1;r_1;1},g_{0;r_1;1},\Phi_{r_2;r_1;1})\bigg]\\
\notag=&\,2\partial_t\partial^{r_2}_t\partial^{r_1}k_C{}^C\partial^A\partial^{r_2}_t\partial^{r_1}k_{NA}-2h^{AB}\partial_t\partial^{r_2}_t\partial^{r_1}k_{NA}\partial_B\partial^{r_2}_t\partial^{r_1}k_C{}^C\\
\notag&-4h^{AB}\partial_t \partial^{r_2}_t\partial^{r_1}k_{NA}\bigg[\partial_C\partial^{r_2}_t\partial^{r_1}f_B{}^C
+\mathcal{N}(k_{r_2;r_1;0},k_{r_2-1;r_1;1},g_{0;r_1;1},\Phi_{r_2;r_1;1})\bigg]\\
&+\partial_t\partial^{r_2}_t\partial^{r_1}k_C{}^C\mathcal{N}(k_{r_2;r_1;0},k_{r_2-1;r_1;1},g_{0;r_1;1},\Phi_{r_2;r_1;1})
\notag
\end{align}
We plug \eqref{enkbdterms} into the boundary terms in \eqref{enineq} and compute: 
\begin{align}\label{bdtermsest}
\notag
& \int_{\partial\tilde{\Sigma}_t}2\partial_t\partial^{r_2}_t\partial^{r_1}k_C{}^C\partial^A\partial^{r_2}_t\partial^{r_1}k_{NA}\mathrm{vol}_{\partial\tilde{\Sigma}_t} \\
=&- \int_{\partial\tilde{\Sigma}_t}2h^{AB}\partial_t\partial_A\partial^{r_2}_t\partial^{r_1}k_C{}^C\partial^{r_2}_t\partial^{r_1}k_{NB}+\Gamma\star\partial^{r_2}_t\partial^{r_1}k\star\partial^{r_2}_t\partial^{r_1}k \mathrm{vol}_{\partial\tilde{\Sigma}_t} \\
\notag=&-\partial_t\bigg(\int_{\partial\tilde{\Sigma}_t}2\partial^A\partial^{r_2}_t\partial^{r_1}k_C{}^C\partial^{r_2}_t\partial^{r_1}k_{NA} \mathrm{vol}_{\partial\tilde{\Sigma}_t}\bigg)+ \int_{\partial\tilde{\Sigma}_t}2\partial^A\partial^{r_2}_t\partial^{r_1}k_C{}^C\partial_t\partial^{r_2}_t\partial^{r_1}k_{NA}\mathrm{vol}_{\partial\tilde{\Sigma}_t}\\
\notag&+\int_{\partial\tilde{\Sigma}_t}k\star\partial^A\partial^{r_2}_t\partial^{r_1}k_C{}^C\partial^{r_2}_t\partial^{r_1}k_{NA}-\Gamma\star\partial^{r_2}_t\partial^{r_1}k\star\partial^{r_2}_t\partial^{r_1}k  \mathrm{vol}_{\partial\tilde{\Sigma}_t}
\end{align}
where
\begin{align}\label{bdtermest2}
\notag
&\int_{\partial\tilde{\Sigma}_t}k\star\partial^A\partial^{r_2}_t\partial^{r_1}k_C{}^C\partial^{r_2}_t\partial^{r_1}k_{NA}-\Gamma\star\partial^{r_2}_t\partial^{r_1}k\star\partial^{r_2}_t\partial^{r_1}k  \mathrm{vol}_{\partial\tilde{\Sigma}_t}\\
=&\int_{\tilde{\Sigma}_t}N\bigg(k\star\partial^A\partial^{r_2}_t\partial^{r_1}k_C{}^C\partial^{r_2}_t\partial^{r_1}k_{NA}-\Gamma\star\partial^{r_2}_t\partial^{r_1}k\star\partial^{r_2}_t\partial^{r_1}k \bigg) \mathrm{vol}_{\tilde{\Sigma}_t}\\
\lesssim&\, E_k(t)+C_{b.d.}\notag
\end{align}
 {In \eqref{bdtermest2}, when $N$ hits the factor $\partial^A\partial^{r_2}_t\partial^{r_1}k_C{}^C$, we commute $N,\partial_A$ and integrate by parts in $\partial_A$ to obtain the last bound.}

The terms in the last two lines of \eqref{enkbdterms} are treated in the same manner, via a trace inequality, exploiting the fact that in the higher order terms, there is always a $\partial_t,\partial_1,\partial_2$ derivative that can be integrated by parts, without creating new boundary terms.
\begin{align}\label{bdtermsest3}
\notag&\int_{\partial\tilde{\Sigma}_t}\partial_t \partial^{r_2}_t\partial^{r_1}k\star \bigg[\partial\partial^{r_2}_t\partial^{r_1}f+\mathcal{N}(k_{r_2;r_1;0},k_{r_2-1;r_1;1},g_{0;r_1;1},\Phi_{r_2;r_1;1})\bigg]\mathrm{vol}_{\partial\tilde{\Sigma}_t}\\
=&\,\partial_t\bigg(\int_{\partial\tilde{\Sigma}_t} \partial^{r_2}_t\partial^{r_1}k\star \partial\partial^{r_2}_t\partial^{r_1}f\mathrm{vol}_{\partial\tilde{\Sigma}_t}\bigg)-\int_{\partial\tilde{\Sigma}_t} \partial^{r_2}_t\partial^{r_1}k\star \frac{\partial_t( \partial\partial^{r_2}_t\partial^{r_1}f\mathrm{vol}_{\partial\tilde{\Sigma}_t})}{\mathrm{vol}_{\tilde{\Sigma}_t}}\mathrm{vol}_{\tilde{\Sigma}_t}\\
\notag&+\int_{\tilde{\Sigma}_t}N\bigg[\partial_t \partial^{r_2}_t\partial^{r_1}k\star \mathcal{N}(k_{r_2;r_1;0},k_{r_2-1;r_1;1},g_{0;r_1;1},\Phi_{r_2;r_1;1})\bigg]\mathrm{vol}_{\tilde{\Sigma}_t}\\
\notag\leq&\,\partial_t\bigg(\int_{\partial\tilde{\Sigma}_t} \partial^{r_2}_t\partial^{r_1}k\star \partial\partial^{r_2}_t\partial^{r_1}f\mathrm{vol}_{\partial\tilde{\Sigma}_t}\bigg)+CC_0[E_k(t)+C_{b.d.}]\\
\notag&+\int_{\tilde{\Sigma}_t}\partial_t N\partial^{r_2}_t\partial^{r_1}k\star \mathcal{N}(k_{r_2;r_1;0},k_{r_2-1;r_1;1},g_{0;r_1;1},\Phi_{r_2;r_1;1})\mathrm{vol}_{\tilde{\Sigma}_t}\\
\notag&+\int_{\tilde{\Sigma}_t}\partial_t \partial^{r_2}_t\partial^{r_1}k\star \mathcal{N}(k_{r_2;r_1;0},k_{r_2-1;r_1;1},g_{0;r_1;1},\Phi_{r_2;r_1;1})\mathrm{vol}_{\tilde{\Sigma}_t}\\
\notag\leq&\,\partial_t\bigg(\int_{\partial\tilde{\Sigma}_t} \partial^{r_2}_t\partial^{r_1}k\star \partial\partial^{r_2}_t\partial^{r_1}f\mathrm{vol}_{\partial\tilde{\Sigma}_t}\bigg)+CC_0[E_k(t)+C_{b.d.}]\\
\notag&+\partial_t\bigg(\int_{\tilde{\Sigma}_t} N\partial^{r_2}_t\partial^{r_1}k\star \mathcal{N}(k_{r_2;r_1;0},k_{r_2-1;r_1;1},g_{0;r_1;1},\Phi_{r_2;r_1;1})\mathrm{vol}_{\tilde{\Sigma}_t}\bigg)\\
\notag&-\int_{\tilde{\Sigma}_t} N\partial^{r_2}_t\partial^{r_1}k\star \mathcal{N}(k_{r_2;r_1;0},k_{r_2-1;r_1;1},g_{0;r_1;1},\Phi_{r_2;r_1;1})\mathrm{vol}_{\tilde{\Sigma}_t}\\
\notag&+\int_{\tilde{\Sigma}_t}\partial_t \partial^{r_2}_t\partial^{r_1}k\star \mathcal{N}(k_{r_2-1;r_1;2})\mathrm{vol}_{\tilde{\Sigma}_t}\\
\notag\leq&\,\partial_t\bigg(\int_{\partial\tilde{\Sigma}_t} \partial^{r_2}_t\partial^{r_1}k\star \partial\partial^{r_2}_t\partial^{r_1}f\mathrm{vol}_{\partial\tilde{\Sigma}_t}\bigg)+CC_0[\sup_{\tau\in[0,t]}E_k(\tau)+C_{b.d.}]\qquad\qquad\quad\text{(IBP in $\partial$ and use of \eqref{Econtrol},\eqref{gijPhiest})}\\
\notag&+\partial_t\bigg(\int_{\tilde{\Sigma}_t} N\partial^{r_2}_t\partial^{r_1}k\star \mathcal{N}(k_{r_2;r_1;0},k_{r_2-1;r_1;1},g_{0;r_1;1},\Phi_{r_2;r_1;1})\mathrm{vol}_{\tilde{\Sigma}_t}\bigg)
\end{align}
Combining \eqref{enineq}-\eqref{bdtermsest3} and integrating in $[0,t]$, we obtain the inequality: 
\begin{align}\label{enineq2}
\notag
\frac{1}{2}E_k(t)\leq&\,C[E_{total}(0)+C_{b.d.}]+CC_0[t\sup_{\tau\in[0,t]}E_k(\tau) +tC_{b.d.}]
-\int_{\tilde{\Sigma}_t}\overline{\partial}^2\partial^{r-1} g\star\partial^{r+1}k\mathrm{vol}_{\tilde{\Sigma}_t}\\
\notag&
+\sum_{r_1+r_2\leq r}\bigg[\int_{\tilde{\Sigma}_t} N\partial^{r_2}_t\partial^{r_1}k\star \mathcal{N}(k_{r_2;r_1;0},k_{r_2-1;r_1;1},g_{0;r_1;1},\Phi_{r_2;r_1;1})\mathrm{vol}_{\tilde{\Sigma}_t}\\
&+\int_{\partial\tilde{\Sigma}_t} \partial^{r_2}_t\partial^{r_1}k\star \partial\partial^{r_2}_t\partial^{r_1}f\mathrm{vol}_{\partial\tilde{\Sigma}_t}
-\int_{\partial\tilde{\Sigma}_t}2h^{AB}\partial_A\partial^{r_2}_t\partial^{r_1}k_C{}^C\partial^{r_2}_t\partial^{r_1}k_{NB}\mathrm{vol}_{\partial\tilde{\Sigma}_t}\bigg]\\
\notag\leq&\,CE_{total}(0)+\frac{C}{\varepsilon}C_{b.d.}+CC_0\big[\frac{t}{\varepsilon}\sup_{\tau\in[0,t]}E_k(\tau) +tC_{b.d.}\big]+\varepsilon E_k(t)\\
\notag&-\sum_{r_1+r_2\leq r}\int_{\partial\tilde{\Sigma}_t}2h^{AB}\partial_A\partial^{r_2}_t\partial^{r_1}k_C{}^C\partial^{r_2}_t\partial^{r_1}k_{NB}\mathrm{vol}_{\partial\tilde{\Sigma}_t}
\end{align}
where in the last inequality, we applied Cauchy-Schwarz, trace inequality, \eqref{bootstrap} and the estimate \eqref{gijPhiest} in Lemma \ref{lem:gijPhiest}. 

{\it Note}: We need to be  careful of the boundary terms in the last line of \eqref{enineq2}, since they do not inherit any smallness. Instead, we perform a precise trace inequality and observe that the resulting constants are within a certain range that allows us to absorb the arising terms in the LHS. To begin with, it is necessary that we use the boundary condition \eqref{kNNbdcond}, in order to reduce the absolute value of the constants in each term generated by the procedure:
\begin{align}\label{bdtermsest4}
&-\int_{\partial\tilde{\Sigma}_t}2h^{AB}\partial_A\partial^{r_2}_t\partial^{r_1}k_C{}^C\partial^{r_2}_t\partial^{r_1}k_{NB}\mathrm{vol}_{\partial\tilde{\Sigma}_t}\\
\tag{by \eqref{kNNbdcond}}=&\int_{\partial\tilde{\Sigma}_t}h^{AB}\partial_A\partial^{r_2}_t\partial^{r_1}k_{NN}\partial^{r_2}_t\partial^{r_1}k_{NB}
-h^{AB}\partial_A\partial^{r_2}_t\partial^{r_1}k_C{}^C\partial^{r_2}_t\partial^{r_1}k_{NB}\mathrm{vol}_{\partial\tilde{\Sigma}_t}\\
\notag=&\int_{\tilde{\Sigma}_t}\bigg[h^{AB}\partial_AN\partial^{r_2}_t\partial^{r_1}k_{NN}\partial^{r_2}_t\partial^{r_1}k_{NB}+h^{AB}\partial_A\partial^{r_2}_t\partial^{r_1}k_{NN}N\partial^{r_2}_t\partial^{r_1}k_{NB}\\
\notag&-h^{AB}\partial_AN\partial^{r_2}_t\partial^{r_1}k_C{}^C\partial^{r_2}_t\partial^{r_1}k_{NB}+h^{AB}\partial_A\partial^{r_2}_t\partial^{r_1}k_C{}^CN\partial^{r_2}_t\partial^{r_1}k_{NB}\bigg]\mathrm{vol}_{\tilde{\Sigma}_t}\\
\notag&+\int_{\tilde{\Sigma}_t}\Gamma\star\partial\partial^{r_2}_t\partial^{r_1}k\star\partial^{r_2}_t\partial^{r_1}k\mathrm{vol}_{\tilde{\Sigma}_t}\\
\tag{IBP}\leq&\int_{\tilde{\Sigma}_t}\bigg[-h^{AB}N\partial^{r_2}_t\partial^{r_1}k_{NN}\partial_A\partial^{r_2}_t\partial^{r_1}k_{NB}+h^{AB}\partial_A\partial^{r_2}_t\partial^{r_1}k_{NN}N\partial^{r_2}_t\partial^{r_1}k_{NB}\\
&+h^{AB}N\partial^{r_2}_t\partial^{r_1}k_C{}^C\partial_A\partial^{r_2}_t\partial^{r_1}k_{NB}-h^{AB}\partial_A\partial^{r_2}_t\partial^{r_1}k_C{}^CN\partial^{r_2}_t\partial^{r_1}k_{NB}
\notag\bigg]\mathrm{vol}_{\tilde{\Sigma}_t}\\
\notag&+CC_0tE_k(t)+\varepsilon E_k(t)+\frac{C}{\varepsilon}E_k(0)\\
\leq&\int_{\tilde{\Sigma}_t}\frac{\eta}{2}|\nabla\partial^{r_2}_t\partial^{r_1}k_{NN}|_g^2+\frac{\eta}{2}|\nabla\partial^{r_2}_t\partial^{r_1}k_C{}^C|_g^2+\frac{1}{\eta}h^{AB}\partial^i \partial^{r_2}_t\partial^{r_1}k_{NA}\partial_i\partial^{r_2}_t\partial^{r_1}k_{NB}\mathrm{vol}_{\tilde{\Sigma}_t}\notag\\
\notag&+CC_0tE_k(t)+\varepsilon E_k(t)+\frac{C}{\varepsilon}E_k(0)
\end{align}
Incorporating \eqref{bdtermsest4} into \eqref{enineq2} and recalling the definition \eqref{Ek} of $E_k(t)$, we deduce the inequality:
\begin{align}\label{enineq3}
\frac{1}{2}E_k(t)\leq&\,\frac{C}{\varepsilon}E_{total}(0)+\frac{C}{\varepsilon}C_{b.d.}+CC_0\big[\frac{t}{\varepsilon}\sup_{\tau\in[0,t]}E_k(\tau) +tC_{b.d.}\big]+\varepsilon E_k(t)
+\max\{\frac{\eta}{2},\frac{1}{4\eta}\}E_k(t),
\end{align}
for all $t\in[0,T]$. Thus, setting $\eta=\frac{3}{4}$ and taking $\varepsilon,\frac{T}{\varepsilon}$ sufficiently small, we can absorb all $E_k$ terms in the LHS and deduce the estimate \eqref{Ekest}
\end{proof}
\begin{remark}\label{rem:trace}
We would like to emphasize the somehow surprising appearance of the boundary terms \eqref{bdtermsest4} in the final energy inequality and their seemingly delicate nature. If we had not splitted up the terms using \eqref{kNNbdcond}, then the coefficient of the last term in \eqref{enineq3} would be $\max\{\eta,\frac{1}{4\eta}\}\ge\frac{1}{2}$, which would render the corresponding term barely non-absorbable.
\end{remark}
\begin{proof}[Sketch of the Picard iteration scheme] One may construct a sequence of iterates $k^n,g^n,\Phi^n$, $n\in\mathbb{N}$, where $k^0,g^0,\Phi^0$ are set equal to their initial values everywhere, by considering the following {\it linear} system of equations:
\begin{align}
\notag\partial_tg_{ij}^{n+1}=&-2\Phi^nk_{ij}^n\\
\label{itsyst}[(\Phi^n)^{-1}\partial_t((\Phi^n)^{-1}\partial_t)-\Delta_{g^n}](k^{n+1})_i{}^j=&\,\mathcal{N}(\Phi_{1;0;2}^n,g^n_{0;0;2},k^n_{1;0;0},k^n_{0;0;1})_i{}^j\\
\notag\Delta_{g^n}\Phi^{n+1}=&\,|k^n|^2\Phi^{n+1}
\end{align}
where the RHS of the second equation corresponds to \eqref{boxkij2}.  Then, by assuming $g^n,k^n,\Phi^n$ satisfy the energy estimate \eqref{Etotalest}, imposing the boundary conditions\footnote{Here we write the boundary conditions for $k^{n+1}$ in covariant form, using the connection of $g^n$.}
\begin{align}\label{itbdcond}
\begin{split}
\Phi^{n+1}=1,\qquad(\tilde{k}^{n+1})_A{}^B=\text{tr}_{g^n}k^{n+1}=0,\qquad{}^{g^n}\nabla_Nk_{NA}^{n+1}=-{}^{g^n}\nabla_B(k^{n+1})_A{}^B,\qquad n\in\mathbb{N}\\
\frac{1}{2}[{}^{g^n}\nabla_N(k^{n+1})_{NN}-{}^{g^n}\nabla_N(k^{n+1})_A{}^A]=-{}^{g^n}\nabla^Ak_{NA}^{n+1},\qquad\text{on $\{\partial\tilde{\Sigma}_t\}_{t\in [0,T]}$,}
\end{split}
\end{align}
and by repeating the energy estimates just derived above,  we infer that $g^{n+1},k^{n+1},\Phi^{n+1}$ satisfy \eqref{Etotalest} as well. This proves by induction that the total energy of the iterates is uniformly bounded in $n\in\mathbb{N}$. As it is usual for quasilinear hyperbolic equations, see \cite[Appendix III.4.3.1]{ChoqBook}, in order to prove that the sequence is a contraction, we must close the corresponding estimates for the differences $g^{n+1}-g^n,k^{n+1}-k^n,\Phi^{n+1}-\Phi^n$ using the analogous energy containing one derivative less of the unknowns. This is due to the presence of terms of the form $(g^n-g^{n-1})\partial^2k^n$ in the resulting equations for the differences or terms of the form $(g^n-g^{n-1})\partial k^n$ in the boundary conditions satisfied by $k^{n+1}-k^n$. However, the energy argument is practically the same. As for the arising boundary integrals that correspond to the latter terms, they can be safely handled by trace inequality, as in \eqref{bdtermsest3}, provided we commute the equations with one coordinate derivative less than for the energy boundedness argument. Hence, we obtain local existence and uniqueness for the original reduced system \eqref{1stvar},\eqref{boxk},\eqref{Phieq}.

Finally, to be fully legitimate, we have to show that the above sequence of iterates is well-defined, that is, a solution to the linear system \eqref{itsyst}, subjected to the conditions \eqref{itbdcond}, actually exists. Given $\Phi^n,k^n$, the first equation is solved trivially for $g^{n+1}$ by integrating in $t$. For the third equation, we notice that the RHS has a favourable sign. Hence, existence for $\Phi^{n+1}-1$ with homogeneous Dirichlet boundary conditions can be shown via the standard Lax-Milgram argument. In order to obtain existence of a solution to the system of wave equations for the components of $k^{n+1}$, we may use a duality argument. This is mainly based on a priori estimates, which we derived in the previous subsection, and a study of the adjoint problem. Roughly speaking, if the adjoint system has a similar form, then the same energy estimates apply, yielding a weak solution by Riesz representation theorem, see \cite[\S5.2.2]{SarTig2}. One can then improve its regularity by using that of the initial/boundary data and the inhomogeneous terms (previous iterates).

Since the equations for $(\tilde{k}^{n+1})_A{}^B,\text{tr}_{g^n}k^{n+1}$, decouple from those for $(k^{n+1})_C{}^C-k_{NN}^{n+1},k_{NA}^{n+1}$, and they are subject to homogeneous Dirichlet boundary conditions, existence for the former is standard. The main ingredient for studying the adjoint system of $(k^{n+1})_C{}^C-k_{NN}^{n+1},k_{NA}^{n+1}$ is identifying the  dual boundary conditions. This is done by considering test functions $v_{CC},v_{NN},v_{NA}\in C^\infty_0(\{0<t<T\})$, multiplying the corresponding equations, integrating in $\{\tilde{\Sigma}_t\}_{t\in[0,T]}$, integrating by parts and setting the arising boundary integrals equal to zero. We may assume that $(k^{n+1})_C{}^C-k_{NN}^{n+1},k_{NA}^{n+1}$ have trivial data, by incorporating them in the RHS. Also, for convenience, we consider the covariant version of the equations, which take the form:
\begin{align}\label{covitboxk}
\begin{split}
(e_0^2-\Delta_{g^n})[(k^{n+1})_C{}^C-k_{NN}^{n+1}]=&\,F^1_{NN}\star \partial k^{n+1}+F^2_{NN}\star k^{n+1}+F^3_{NN}\\
(e_0^2-\Delta_{g^n})k_{NA}^{n+1}=&\,F^1_{NA}\star \partial k^{n+1}+F^2_{NA}\star k^{n+1}+F^3_{NA},
\end{split}
\end{align}
where $F^i_{Nj}$ are known, regular functions and $e_0:=\partial_s$. Then in order to make the boundary condition for $k^{n+1}_{NA}$, in \eqref{itbdcond}, purely homogeneous, we substract from $k^{n+1}_{NA}$ a function depending on the boundary data $(\hat{k}^{n+1})_A{}^B$, such that we have 
\begin{align}\label{itkNAbdcond}
{^{g^n}\nabla_Nk^{n+1}_{NA}}=-\frac{1}{2}\nabla_A(k^{n+1})_C{}^C
\end{align}
We consider now the weak formulation of the equations \eqref{covitboxk} that corresponds to a coercive energy [cf. \eqref{Ek}]:
\begin{align}\label{weakeq}
\notag&\int^T_0\int_{\tilde{\Sigma}_s}(v_C{}^C-v_{NN})(F^1_{NN}\star \partial k^{n+1}+F^2_{NN}\star k^{n+1}+F^3_{NN})\\
\notag&+8v_N{}^A(F^1_{NA}\star \partial k^{n+1}+F^2_{NA}\star k^{n+1}+F^3_{NA})
\mathrm{vol}_{\tilde{\Sigma}_s}ds\\
\notag=&\int^T_0\int_{\tilde{\Sigma}_s}(v_C{}^C-v_{NN})(e_0^2-\Delta_{g^n})[(k^{n+1})_C{}^C-k_{NN}^{n+1}]+8v_N{}^A(e_0^2-\Delta_{g^n})k_{NA}^{n+1}
\mathrm{vol}_{\tilde{\Sigma}_s}ds\\
=&\int^T_0\int_{\tilde{\Sigma}_s}[(k^{n+1})_C{}^C-k_{NN}^{n+1}](e_0^2-\Delta_{g^n})(v_C{}^C-v_{NN})+8k_{NA}^{n+1}(e_0^2-\Delta_{g^n})v_N{}^A
\mathrm{vol}_{\tilde{\Sigma}_s}ds\\
\notag&+\int^T_0\int_{\partial\tilde{\Sigma}_s}\bigg[[(k^{n+1})_C{}^C-k_{NN}^{n+1}]\nabla_N(v_C{}^C-v_{NN})-(v_C{}^C-v_{NN})\nabla_N[(k^{n+1})_C{}^C-k_{NN}^{n+1}]\\
\notag&+8k_{NA}^{n+1}\nabla_Nv_N{}^A
-8v_N{}^A\nabla_Nk_{NA}^{n+1}\bigg]
\mathrm{vol}_{\partial\tilde{\Sigma}_s}ds
\end{align}
The adjoint system of equations for $v_C{}^C-v_{NN},v_{NA}$ can be read from \eqref{weakeq}, by setting the boundary terms in the last two lines equal to zero. Evidently, it is of the form \eqref{covitboxk} and the dual boundary conditions are derived from
\begin{align}\label{dualbdcond}
\notag0=&\int^T_0\int_{\partial\tilde{\Sigma}_s}\bigg[[(k^{n+1})_C{}^C-k_{NN}^{n+1}]\nabla_N(v_C{}^C-v_{NN})-(v_C{}^C-v_{NN})\nabla_N[(k^{n+1})_C{}^C-k_{NN}^{n+1}]\\
\notag&+8k_{NA}^{n+1}\nabla_Nv_N{}^A
-8v_N{}^A\nabla_Nk_{NA}^{n+1}\bigg]
\mathrm{vol}_{\partial\tilde{\Sigma}_s}ds\\
\notag=&\int^T_0\int_{\partial\tilde{\Sigma}_s}\bigg[2(k^{n+1})_C{}^C\nabla_N(v_C{}^C-v_{NN})-2v_C{}^C2\nabla^Ak_{NA}^{n+1}
+8k_{NA}^{n+1}\nabla_Nv_N{}^A\\
&\notag+8v_N{}^A\frac{1}{2}\nabla_A(k^{n+1})_C{}^C\bigg]
\mathrm{vol}_{\partial\tilde{\Sigma}_s}ds\\
=&\int^T_0\int_{\partial\tilde{\Sigma}_s}\bigg[2(k^{n+1})_C{}^C(\nabla_Nv_C{}^C-\nabla_Nv_{NN}-2\nabla_Av_N{}^A)\\
\notag&+4k_{NA}^{n+1}(2\nabla_Nv_N{}^A+\nabla^Av_C{}^C)\bigg]
\mathrm{vol}_{\partial\tilde{\Sigma}_s}ds
\end{align}
Thus, we conclude that the boundary conditions for $v_C{}^C-v_{NN},v_{NA}$ are the same as the ones in \eqref{itbdcond},\eqref{itkNAbdcond} for $(k^{n+1})_C{}^C-k^{n+1}_{NN},k^{n+1}_{NA}$:
\begin{align}\label{dualbdcond2}
\nabla_Nv_C{}^C-\nabla_Nv_{NN}=2\nabla^Av_{NA},\qquad \nabla_Nv_{NA}=-\frac{1}{2}\nabla_Av_C{}^C.
\end{align}
A posteriori this can be justified from the fact that the boundary conditions for $k^{n+1}$ were chosen such that after various integrations by parts in the energy estimates, the quadratic terms in first derivatives of $k^{n+1}$ in the boundary terms cancel out, leaving terms that can be handled by trace inequality.
The energy estimates for the adjoint problem are therefore the same. This proves existence for $(k^{n+1})_C{}^C-k^{n+1}_{NN},k^{n+1}_{NA}$ and completes our sketch of the Picard iteration scheme.
\end{proof}

\section{Vanishing of the Ricci tensor: Solution to the EVE}\label{sec:verEVE}

Let ${\bf g}$ be the metric of the form \eqref{metric}, where $g$ satisfies \eqref{1stvar} and  $k,\Phi$ solve \eqref{boxk},\eqref{Phieq}. 
Then, according to Proposition \ref{prop:equiveq} , the spacetime Ricci tensor satisfies the propagation equation \eqref{e0Rij4}. Considering the Einstein tensor $G_{ij}={\bf R}_{ij}-\frac{1}{2}g_{ij}{\bf R}$, \eqref{e0Rij4} implies the equation:
\begin{align}\label{e0Gij}
e_0G_{ij}=\nabla_i\mathcal{G}_j+\nabla_j\mathcal{G}_i-g_{ij}\nabla^a\mathcal{G}_a-\nabla_i\nabla_j\mathrm{tr}k+\frac{1}{2}g_{ij}\Delta_g\mathrm{tr}k+k_{ij}{\bf R}+\frac{1}{2}g_{ij}e_0{\bf R}_{00}-g_{ij}k^{ab}{\bf R}_{ab}
\end{align}
The above equation is coupled to the wave equation \eqref{boxtrk} for $\mathrm{tr}k$:
\begin{align}\label{boxtrk2}
e_0^2\mathrm{tr}k-\Delta_g \mathrm{tr}k
=e_0[(\mathrm{tr}k)^2]+4\Phi^{-1}(\nabla^a\Phi)\mathcal{G}_a
\end{align}
On the other hand, from \eqref{d/dttrk} and \eqref{Phieq} we also have
\begin{align}\label{R00}
{\bf R}_{00}=e_0\mathrm{tr}k.
\end{align}
Combining \eqref{boxtrk2}-\eqref{R00}, the equation \eqref{e0Gij} becomes
\begin{align}\label{e0Gij2}
\begin{split}
e_0G_{ij}=&\,\nabla_i\mathcal{G}_j+\nabla_j\mathcal{G}_i-g_{ij}\nabla^a\mathcal{G}_a-\nabla_i\nabla_j\mathrm{tr}k+g_{ij}\Delta_g\mathrm{tr}k+k_{ij}{\bf R}-g_{ij}k^{ab}{\bf R}_{ab}\\
&+\frac{1}{2}g_{ij}e_0[(\mathrm{tr}k)^2]+2g_{ij}\Phi^{-1}(\nabla^a\Phi)\mathcal{G}_a,
\end{split}
\end{align}
where we can also write
\begin{align}\label{Ric=G}
{\bf R}=-G,\qquad{\bf R}_{ab}=G_{ij}-\frac{1}{2}g_{ij}G,\qquad G:={\bf g}^{ab}G_{ab}.
\end{align}
Next, we utilise the contracted second Bianchi identity to derive a propagation equation for $\mathcal{G}_i$:
\begin{align}\label{BianchiR0i}
\notag e_0\mathcal{G}_i=e_0{\bf R}_{0i}=&\,D_0{\bf R}_{0i}+\Phi^{-1}\nabla^j\Phi{\bf R}_{ji}-k_i{}^j{\bf R}_{0j}+\Phi^{-1}\nabla_i\Phi{\bf R}_{00}\\
\notag=&\,D_j{\bf R}_i{}^j-\frac{1}{2}\partial_i{\bf R}+\Phi^{-1}\nabla^j\Phi{\bf R}_{ji}-k_i{}^j{\bf R}_{0j}+\Phi^{-1}\nabla_i\Phi{\bf R}_{00}\\
=&\,\nabla_j{\bf R}_i{}^j-\frac{1}{2}\partial_i{\bf R}+\Phi^{-1}\nabla^j\Phi{\bf R}_{ji}+k_i{}^j{\bf R}_{0j}-\Phi^{-1}\nabla_i\Phi{\bf R}_{00}\\
\notag&+\mathrm{tr}k{\bf R}_{0i}+k_i{}^j{\bf R}_{0j}\\
\notag=&\,\nabla_jG_i{}^j+\Phi^{-1}\nabla^j\Phi{\bf R}_{ji}+2k_i{}^j{\bf R}_{0j}+\mathrm{tr}k{\bf R}_{0i}-\Phi^{-1}\nabla_i\Phi{\bf R}_{00}
\end{align}
Taking the divergence of \eqref{e0Gij} and utilising \eqref{BianchiR0i}, we derive the following wave equation for $\mathcal{G}_i$:
\begin{align}\label{boxR0i}
 e_0^2\mathcal{G}_i
-\Delta_g\mathcal{G}_i=
L(\nabla G,G,\nabla\mathcal{G},\mathcal{G},\nabla\nabla\mathrm{tr}k,e_0\nabla\mathrm{tr}k,\nabla\mathrm{tr}k,e_0\mathrm{tr}k)
\end{align}
where $L$ is a linear operator in the corresponding variables.\footnote{The coefficients can be expressed in terms of the reduced solution and are therefore  in $C^1$.}

\subsection{Boundary conditions}

The conditions \eqref{kNNbdcond}-\eqref{kCCbdcond}, combined with the Coddazi identity \eqref{Codazzi}, imply the following boundary conditions on $\mathcal{T}$: 
\begin{align}\label{Ricbdcond}
\begin{split}
\mathrm{tr}k=0,\qquad \mathcal{G}_A={\bf R}_{0A}=\partial_A\mathrm{tr}k-\nabla^ik_{Ai}=\partial_A\mathrm{tr}k=0,\\
 \mathcal{G}_N={\bf R}_{0N}=\nabla_N\mathrm{tr}k-\nabla^ik_{Ni}=\nabla_Nk_A{}^A-\nabla^Ak_{NA}=\frac{1}{2}\nabla_N\mathrm{tr}k
\end{split} 
\end{align}
These conditions, together with \eqref{e0Gij},\eqref{boxtrk2},\eqref{boxR0i}, define a well-determined boundary value problem for $\mathrm{tr}k,{\bf G}_{ij},\mathcal{G}_i$.

\subsection{Modified equations}

In order to avoid losing derivatives, we need to modify the $\mathcal{G}_i$'s, such that they all have homogeneous Dirichlet data on the boundary. For this purpose we set:
\begin{align}\label{Gitilde}
\tilde{\mathcal{G}}_i:=\mathcal{G}_i-\frac{1}{2}\nabla_i\mathrm{tr}k
\end{align}
Then \eqref{e0Gij} becomes
\begin{align}\label{e0Gij2}
\begin{split}
e_0G_{ij}=&\,\nabla_i\tilde{\mathcal{G}}_j+\nabla_j\tilde{\mathcal{G}}_i-g_{ij}\nabla^a\tilde{\mathcal{G}}_a+\frac{1}{2}g_{ij}\Delta_g\mathrm{tr}k+k_{ij}{\bf R}-g_{ij}k^{ab}{\bf R}_{ab}\\
&+\frac{1}{2}g_{ij}e_0[(\mathrm{tr}k)^2]+2g_{ij}\Phi^{-1}(\nabla^a\Phi)\tilde{\mathcal{G}}_a+g_{ij}\Phi^{-1}(\nabla^a\Phi)\nabla_a\mathrm{tr}k
\end{split}
\end{align}
On the other hand, $\tilde{\mathcal{G}}_i$ satisfies a wave equation of the same form as \eqref{boxR0i}, since the additional terms in the LHS, after plugging in \eqref{Gitilde}, equal
\begin{align}\label{addtermGitilde}
-\frac{1}{2}e_0^2\nabla_i\mathrm{tr}k+\frac{1}{2}\Delta_g\nabla_i\mathrm{tr}k,
\end{align}
which after the appropriate commutations and the use of \eqref{boxtrk2} give rise to terms which are already included in $\tilde{L}$ below:
\begin{align}\label{boxGitilde}
e_0^2\tilde{\mathcal{G}}_i
-\Delta_g\tilde{\mathcal{G}}_i
=\tilde{L}(\nabla G,G,\nabla\tilde{\mathcal{G}},\tilde{\mathcal{G}},\nabla\nabla\mathrm{tr}k,e_0\nabla\mathrm{tr}k,\nabla\mathrm{tr}k,e_0\mathrm{tr}k)
\end{align}
The advantage of working with $\tilde{\mathcal{G}}_i$ is that they all satisfy homogeneous Dirichlet boundary conditions:
\begin{align}\label{tildeGibdcond}
\tilde{\mathcal{G}}_i:=\mathcal{G}_i-\frac{1}{2}\nabla_i\mathrm{tr}k={\bf R}_{0i}-\frac{1}{2}\nabla_i\mathrm{tr}k\overset{\eqref{Ricbdcond}}{=}0,&&\text{on $\mathcal{T}$}.
\end{align}
\subsection{Energy estimates for the Einstein tensor}

Recall that the Einstein tensor and $\mathrm{tr}k,\partial_t\text{tr}k$ vanish initially on $\tilde{\Sigma}_0$ by assumption, see \eqref{dtkinit}-\eqref{dttrkinit}. In order to prove the vanishing of the Einstein tensor, together with the validity of our gauge condition everywhere, $\mathrm{tr}k=0$, we need to establish an energy estimate for the system \eqref{e0Gij2},\eqref{boxtrk2},\eqref{boxGitilde}, subject to the boundary conditions \eqref{tildeGibdcond}, $\mathrm{tr}k=0$.

The energy for $\tilde{\mathcal{G}}_i$ reads
\begin{align}\label{Gien}
\int_{\tilde{\Sigma}_t}g^{ij}e_0\tilde{\mathcal{G}}_ie_0\tilde{\mathcal{G}}_j+\nabla^j\tilde{\mathcal{G}}^i\nabla_j\tilde{\mathcal{G}}_i+\tilde{\mathcal{G}}^i\mathcal{G}_i
\end{align}
Going back to \eqref{boxtrk2}, we notice that there is room to commute once with any tangential derivative $\partial=\partial_t,\partial_1,\partial_2$. Thus, by a standard Gronwall type of argument, we can control the following energy of $\mathrm{tr}k$:
\begin{align}\label{Etrk}
E[\text{tr}k]:=\int_{\tilde{\Sigma}_t}(e_0^2\mathrm{tr}k)^2+(\nabla\partial\mathrm{tr}k)^2+(e_0\partial\mathrm{tr}k)^2+(e_0\mathrm{tr}k)^2+(\nabla\mathrm{tr}k)^2+(\mathrm{tr}k)^2
\end{align}
by
\begin{align}\label{trkenest}
E[\text{tr}k]
\lesssim T\sum_i\sup_{t\in[0,T]}(\|e_0\tilde{\mathcal{G}}_i\|_{L^2}^2+\|\nabla\tilde{\mathcal{G}}_i\|_{L^2}^2+\|\tilde{\mathcal{G}}_i\|_{L^2}^2
\end{align}
Then, using the wave equation \eqref{boxtrk2}, we can also control $\nabla_N\nabla_N\text{tr}k$ in $L^2$, hence, controlling all second derivatives of $\text{tr}k$:
\begin{align}\label{trkenest2}
\|\nabla_N\nabla_N\text{tr}k\|_{L^2}^2
\lesssim T\sum_i\sup_{t\in[0,T]}(\|e_0\tilde{\mathcal{G}}_i\|_{L^2}^2+\|\nabla\tilde{\mathcal{G}}_i\|_{L^2}^2)+\sum_i\sup_{t\in[0,T]}\|\tilde{\mathcal{G}}_i\|_{L^2}^2
\end{align}
Note that although the last term in the preceding inequality, seemingly, has no smallness in $T$, we can directly bound it from its time derivative
\begin{align}\label{GiL2est}
e_0\|\tilde{\mathcal{G}}_i\|_{L^2}^2\lesssim \|\tilde{\mathcal{G}}_i\|^2_{L^2}+\|e_0\tilde{\mathcal{G}}_i\|^2_{L^2}\quad\Rightarrow\quad \|\tilde{\mathcal{G}}_i\|_{L^2}^2\lesssim T\|e_0\tilde{\mathcal{G}}_i\|^2_{L^2}
\end{align}

On the other hand, we are forced to estimate $G_{ij}$ only in $L^2$, due to the first order terms $\nabla_j\tilde{\mathcal{G}}_i$ in the RHS of \eqref{e0Gij2}. A direct time differentiation of its $L^2$ norm, followed by a use of Gronwall's inequality yields
\begin{align}\label{Gijenest}
\sum_{i,j=1}^3\int_{\tilde{\Sigma}_t}(G_{ij})^2
\lesssim T\sum_i\sup_{t\in[0,T]}(\|e_0\tilde{\mathcal{G}}_i\|_{L^2}^2+\|\nabla\tilde{\mathcal{G}}_i\|_{L^2}^2+\|\tilde{\mathcal{G}}_i\|_{L^2}^2)
\end{align}

For the energy estimates for $\tilde{\mathcal{G}}_i$, we first note that both $\tilde{\mathcal{G}}_i,e_0\tilde{\mathcal{G}}_i$ vanish initially at $t=0$ by \eqref{R0iinit} and \eqref{dtkinit},\eqref{BianchiR0i}. Also, according to \eqref{tildeGibdcond}, $\tilde{\mathcal{G}}_i$ has homogeneous Dirichlet boundary data, for all $i$. Thus, if it was not for the terms $\nabla G$ in the RHS of \eqref{boxGitilde}, the energy estimates for $\mathcal{G}_a$ would be standard. However, we can handle these terms by integration by parts. More precisely, the standard energy estimate for \eqref{boxR0i}, combined with \eqref{trkenest}-\eqref{Gijenest}, gives an inequality of the form:
\begin{align}\label{R0iest}
\begin{split}
\int_{\tilde{\Sigma}_t}(e_0\tilde{\mathcal{G}})^2+(\nabla\tilde{\mathcal{G}})^2+(\tilde{\mathcal{G}})^2
\leq &\,CT\sum_a\sup_{t\in[0,T]}(\|e_0\tilde{\mathcal{G}}_a\|_{L^2}^2+\|\nabla\tilde{\mathcal{G}}_a\|_{L^2}^2+\|\tilde{\mathcal{G}}_a\|_{L^2}^2)\\
&+\int^t_0\int_{\tilde{\Sigma}_\tau}
L(\nabla G,\partial_\tau\tilde{\mathcal{G}})
\end{split}
\end{align}
Moreover, integrating by parts we have:
\begin{align}\label{R0iest2}
\begin{split}
\int^t_0\int_{\tilde{\Sigma}_\tau}f\nabla G\partial_\tau\tilde{\mathcal{G}}_b
\overset{\eqref{tildeGibdcond}}{=}&-\int^t_0\int_{\tilde{\Sigma}_\tau}( fG\partial_\tau\nabla\tilde{\mathcal{G}}_b+f\partial_\tau\Gamma G\tilde{\mathcal{G}}_b+\nabla f G\partial_\tau\tilde{\mathcal{G}}_b)\\
=&\int^t_0\int_{\tilde{\Sigma}_\tau}[\partial_\tau (fG)\nabla\tilde{\mathcal{G}}_b-\Gamma G\tilde{\mathcal{G}}_b-\nabla f G\partial_\tau\tilde{\mathcal{G}}_b]-\int_{\tilde{\Sigma}_t}f G\nabla\tilde{\mathcal{G}}_b
\end{split}
\end{align}
Notice that all the terms in the preceding RHS can be absorbed in the LHS of \eqref{R0iest}, after plugging in \eqref{e0Gij2}, by using Cauchy-Schwarz and the smallness in $T$ in the above estimates.

Thus, all variables $\mathrm{tr}k,G_{ij},\tilde{\mathcal{G}}_i$ must vanish everywhere, which shows that the solution of the reduced equations \eqref{1stvar},\eqref{boxk},\eqref{Phieq} is indeed a solution of the EVE, satisfying the maximal gauge $\text{tr}k=0$.



\end{document}